\DeclareMathAlphabet{\mathpzc}{OT1}{pzc}{m}{it}
\newcommand{\bq}{\mathbf{q}}
\newcommand{\brho}{\bm{\rho}}
\newcommand{\bmu}{\bm{\mu}}
\newcommand{\bnu}{\bm{\nu}}
\newcommand{\bomega}{\bm{\omega}}
\newcommand{\E}{\mathbb{E}}
\newcommand{\ex}{\mathbb{E}}
\newcommand{\norm}[1]{\| #1 \|}
\newcommand{\indic}{\mathbb{I}}
\newcommand{\beqn}{\begin{equation}}
\newcommand{\eeqn}{\end{equation}}
\newcommand\Reals{{\mathbb{R}}}
\newcommand\Ints{{\mathbb{Z}}}
\newcommand{\Nats}{{\mathbb{N}}}
\newcommand{\eqdef}{{:=}}
\providecommand{\norm}[1]{{\lVert#1\rVert}}
\newcommand{\bA}{{\bm{A}}}
\newcommand{\bQ}{{\mathbf{Q}}}
\newcommand{\ba}{{\mathbf{a}}}
\newcommand{\bfe}{{\mathbf{e}}}
\newcommand{\bw}{{\mathbf{w}}}
\newcommand{\bx}{{\mathbf{x}}}
\newcommand{\bs}{{\mathbf{s}}}
\newcommand{\bS}{{\bm{S}}}
\newcommand{\by}{{\mathbf{y}}}
\newcommand{\bz}{{\mathbf{z}}}
\newcommand{\cA}{{\mathcal{A}}}
\newcommand{\cC}{{\mathcal{C}}}
\newcommand{\cM}{{\mathcal{M}}}
\newcommand{\cP}{{\mathcal{P}}}
\newcommand{\cV}{{\mathcal{V}}}
\newcommand{\IndSet}{\mathcal{I}}
\newcommand{\Int}{\mathbb{Z}}
\newcommand{\bzero}{\mathbf{0}}
\def\ba#1\ea{\begin{align*}#1\end{align*}}
\def\ban#1\ean{\begin{align}#1\end{align}}
\newcommand{\todo}[1]{}
\newcommand{\vlam}{\ensuremath{{\mbox{\boldmath{$\lambda$}}}} }
\newcommand{\siva}[1]{\textcolor{brown}{\textsc{!!SIVA:} #1 !! }}
\newcommand{\mss}[1]{{\textcolor{black}{#1}}}
\begin{document}


\RUNAUTHOR{Lu et al.}

\RUNTITLE{Delay-Optimal Scheduling in Input-Queued Switches}

\TITLE{Optimal Weighted-Delay Scheduling in \\{\LARGE{$2\times 2$}} Input-Queued Switches}

\ARTICLEAUTHORS{%
\AUTHOR{Yingdong Lu}
\AFF{Mathematical Sciences Department, IBM Research, Yorktown Heights, NY 10598, USA, \EMAIL{yingdong@us.ibm.com}} 
\AUTHOR{Siva Theja Maguluri}
\AFF{School of Industrial and Systems Engineering, Georgia Institute of Technology, Atlanta, GA 30332, USA, \EMAIL{siva.theja@gatech.edu}}
\AUTHOR{Mark S.\ Squillante}
\AFF{Mathematical Sciences Department, IBM Research, Yorktown Heights, NY 10598, USA, \EMAIL{mss@us.ibm.com}} 
\AUTHOR{Tonghoon Suk}
\AFF{Digital Technology Department, Otis Elevator Company,  Farmington, CT 06032, USA, \EMAIL{tonghoon.suk@otis.com}} 
\AUTHOR{Xiaofan Wu}
\AFF{School of Industrial and Systems Engineering, Georgia Institute of Technology, Atlanta, GA 30332, USA, \EMAIL{xwu319@gatech.edu}}
} 

\ABSTRACT{%
Motivated by few delay-optimal scheduling results, in comparison to results on throughput optimality, we investigate a canonical input-queued switch scheduling problem in which the objective is to minimize the discounted delay cost over an infinite time horizon. We derive an optimal scheduling policy and establish corresponding theoretical properties, which are expected to be of interest more broadly than input-queued switches. Computational experiments demonstrate and quantify the benefits of our optimal scheduling policy over alternative policies such as variants of MaxWeight scheduling, well-known to be throughput optimal and more recently shown to be delay optimal in the heavy-traffic regime limit.
}%


\KEYWORDS{Optimal Control, Markov Decision Processes, Input-Queued Switches.}
\HISTORY{.}

\maketitle

%


\section{Introduction}
Input-queued switch architectures are widely used in modern computer and communication networks. The analysis and control of these high-speed, low-latency switch networks is critical for understanding fundamental design and performance issues related to internet routers, cloud computing data centers, and high-performance computing. A large and rich literature exists around scheduling in these computer and communication systems. Moreover, input-queued switches represent an important mathematical model for studying theoretical scheduling issues of broad interest.

Most of the previous research related to scheduling in input-queued switches has focused on optimal throughput. In particular, the MaxWeight scheduling policy, first introduced by \cite{TasEph92} for wireless networks and subsequently by \cite{MCKEOWN96} specifically for input-queued switches,
is well-known for being throughput optimal. The issue of delay-optimal scheduling for switches, however, is far less clear with much more limited results. This is not surprising given that the study of delays (or equivalently, via Little's Law, queue lengths) in these types of stochastic systems is hard in general. Hence, previous research on optimal delay scheduling in input-queued switches has focused on heavy-traffic and related asymptotic regimes; see, e.g., \cite{Stolyar_cone_SSC, ShaWis_11, kang2009diffusion,shah2012optimal,zhong2014scaling}.

Such previous work includes \cite{stolyar2004} establishing that the MaxWeight scheduling policy is asymptotically optimal in heavy traffic for an objective function of the summation of the squares of the queue lengths with the assumption of complete resource pooling; and \cite{MagSri_SSY16_Switch} showing that MaxWeight scheduling is optimal in heavy traffic for an objective function of the summation of the queue lengths, under the
assumption that all the ports are saturated; with these latter results subsequently extended by \cite{maguluri2016b} to the case of incompletely saturated ports, though still for the summation of the average queue lengths.
Nevertheless, beyond these and related recent results limited to the heavy-traffic regime, the question of delay-optimal scheduling in input-queued switches remains open in general, as does the question of optimal scheduling under even more general objective functions (such as those considered herein).

In this paper, we seek to gain fundamental insights on optimal delay-cost scheduling in these stochastic systems by focusing on the canonical $2 \times 2$ input-queued switch model.
The objective of the corresponding stochastic control problem is to determine the scheduling policy that minimizes the discounted summation over general linear cost functions of the
expected delays (queue lengths) associated with each queue.
Using well-known results (e.g.,~\cite{O-L-book,sennott-book}),
the optimal scheduling policy for the discounted-cost setting can be used to obtain an optimal policy for the corresponding average-cost setting.
Our derivation of an optimal solution consists of first partitioning the state space into three different scheduling decision regions of interest, namely the interior, the trivial boundary, and the critical boundary.
We then derive an optimal policy and establish structural properties of the associated value function for each of these regions, in particular showing that the optimal solution in the interior follows the $c\mu$ policy. 
Meanwhile, for all other regimes except for trivial regions of the boundary of the state space wherein the optimal decisions are obvious,
we establish that an optimal policy either follows the $c\mu$ policy or consists of a switching curve that takes into account the arrival processes.
We derive explicit expressions for the optimal switching curve under a scenario consisting of symmetric Bernoulli arrivals and unit costs.
More generally, we exploit our theoretical results on the switching curve to devise an approach to approximate the optimal policy and we show that this renders an asymptotically optimal policy.
Our optimal cost-weighted-delay scheduling analysis and results for the $2\times 2$ switch are, to the best of our knowledge, the first set of theoretical results on an optimal
scheduling policy to minimize the expected weighted queue lengths in general for a non-trivial switch;
and the fundamental insights gained therefrom motivate our ongoing analysis and results for the general system in the sequel.
These insights include fundamental differences in the decisions made under our optimal policy and those under the MaxWeight policy which shows that the latter is not delay optimal in general.

Given these important differences in decisions between our optimal scheduling policy and the MaxWeight scheduling policy, together with recent results on the queue-length
(delay) optimality of the latter in the heavy-traffic regime limit, we conduct numerous computational experiments to shed further light on various important theoretical
issues with respect to optimal delay-cost scheduling in input-queued switches.
In the case of symmetric arrivals and unit costs, our optimal solution renders an explicit optimal scheduling policy and the corresponding computational experiments demonstrate significant differences in steady-state queue-length performance between our optimal policy and MaxWeight scheduling, further
supporting that the MaxWeight policy
is not
delay optimal in general.
From these results we also observe a gap between the performance of our optimal solution and the corresponding weight-based universal lower bound established by~\cite{LuMaSq+18},
suggesting that the lower bound may not be tight. 
More generally, our optimal solution involves a switching curve in some cases of the critical boundary region, for which we exploit our theoretical results to obtain a look-ahead
policy that is proven to be asymptotically optimal.
The corresponding computational experiments indicate that the look-ahead policy converges quickly and outperforms both MaxWeight scheduling and one of its related variants, where the optimality gap varies from $7\%$ to $16\%$ depending on the experimental settings.

A preliminary analysis of a very special case of the $2\times 2$ input-queued switch, restricted solely to symmetric arrivals and unit costs, has appeared in a conference proceedings (without copyright transfer); refer to \citep{LuMaSq+17}.
The current paper significantly extends this preliminary conference paper in several important aspects, including our complete and thorough derivation of a solution to the delay-cost optimal scheduling problem under general (linear) costs and general arrivals for the canonical $2 \times 2$ input-queued switch and an expanded set of computational experiments that quantitatively evaluate our optimal scheduling policy and explores issues with respect to the optimality of MaxWeight scheduling in general.

The remainder of this paper is organized as follows.
We first present in Section~\ref{sec:model} some technical preliminaries, including our mathematical model for the canonical $2 \times 2$ input-queued switch, our formulation of the corresponding mathematical optimization problem, and an important equivalent problem formulation.
We then present in Section~\ref{sec:optimal} our analysis and results for optimal delay-cost scheduling and related structural properties,
with many of our proofs collected in Section~\ref{sec:proofs}.
Computational experiments results are presented in Section~\ref{sec:experiments}, followed by concluding remarks.
Additional proofs are provided in 
the Appendices.

\section{Technical Preliminaries}\label{sec:model}

\subsection{Mathematical Model}
Consider an input-queued switch with $2$ input ports, $2$ output ports, and a queue associated with each input-output port combination
that stores packets (customers) waiting to be transmitted from the input port to the output port.
Let $(i,j) \in \IndSet \eqdef \{ (i,j) : i,j\in\{1,2\} \}$ index the queue associated with input port $i$ and output port $j$.
Packets arrive at queue $(i,j)$ according to an exogenous stochastic process. 
All packets are assumed to be of the same size and require exactly one unit of service.

Time is slotted and denoted by a nonnegative integer $t\in\Int_+ \eqdef \{0,1,\ldots\}$.
At each time $t$, a \emph{schedule} refers to a subset of active queues that satisfies the constraints:
(1) At most one queue of each input port can be active;
(2) At most one queue of each output port can be active.
A maximal schedule refers to a schedule with exactly one queue for each input and exactly one queue for each output being active.
A scheduling policy selects a schedule from which to simultaneously transmit packets.
Formally, a schedule can be described by a $4$-dimensional binary vector $\bs=(s_{\brho})_{\brho\in\IndSet}$ such that
$s_{\brho}=1$ if queue $\brho$ is in the schedule, and $s_{\brho}=0$ otherwise;
if one of the activated queues in a schedule is empty, then the provided service at this queue is simply assumed to be unused.
Denote by $\cP$ the set of all possible schedules.

We study the scheduling problem in which a policy $\pi$ selects a schedule $\bS^\pi(t) \eqdef (S^\pi_{\brho}(t))_{\brho\in\IndSet} \in \cP$
in every time slot $t$.
Let $Q^\pi_{\brho}(t) \in \Int_+$ denote the length of queue $\brho$ at time $t$ under policy $\pi$ and $A_{\brho}(t) \in \Int_+$ the number of arrivals to queue $\brho$ during $[t,t+1)$.
We assume that $\{ A_{\brho}(t) : t \in \Int_+, \, \brho \in \IndSet \}$ are independent random variables and that, for fixed $\brho \in \IndSet$,
$\{ A_{\brho}(t) : t \in \Int_+ \}$ are identically distributed with $\lambda_{\brho} = \E[A_{\brho}(t)]$ the arrival rate of queue $\brho$.
Define $\bQ^\pi(t) \eqdef (Q^\pi_{\brho}(t))_{\brho\in\IndSet}$ and $\bA(t) \eqdef (A_{\brho}(t))_{\brho\in\IndSet}$.
Since each arrival process is independent and identically distributed (i.i.d.), we use $\bA$ to denote a random variable that has the same joint distribution as $\bA(t)$ for any $t$.
Within the time slot, service happens before arrivals.
The queueing dynamics under policy $\pi$ can then be expressed as
\begin{align}
Q^\pi_{\brho}(t+1) & =  Q^\pi_{\brho}(t) + A_{\brho}(t) - S^\pi_{\brho}(t) \cdot \indic_{\{Q^\pi_{\brho}(t)>0\}}, \label{eq:dynamics_of_Q} \\
& =
[Q^\pi_{\brho}(t) -  S^\pi_{\brho}(t) ]^+ + A_{\brho}(t)
\nonumber
\end{align}
where $\indic_\cA$ denotes an indicator function associated with event $\cA$, returning $1$ if $\cA$ is true and $0$ otherwise,
and $[x]^+$ denotes $\max\{x,0\}$.

It is well known, see, e.g.,~\cite{stolyar2004maxweight}, that the capacity region of the switch is given by
\begin{align*}
\cC = \left\{\vlam: \sum_i\lambda_{(i,j)}<1 \text{ and }  \sum_j\lambda_{(i,j)}<1\right\},
\end{align*}
i.e., the total arrival rate to each input port and each output port can be at most one.
Hence, as long as the arrival rates are in the capacity region $\cC$, there exists a scheduling policy under which the underlying Markov chain
of the queue length processes $\bQ^\pi(t)$ is positive recurrent.
Moreover, for any arrival rate outside the set $\cC$, no scheduling policy can lead to a positive recurrent $\bQ^\pi(t)$.
We therefore assume throughout that the arrival rates are within the capacity region $\cC$.

\subsection{Formulation of Mathematical Optimization}
Our goal is to establish an optimal scheduling policy that minimizes the total discounted delay cost over an infinite time horizon. 
Given the relationship between delays and queue lengths via Little's Law, we henceforth focus on cost as a function of the queue lengths.
More specifically, the cost under policy $\pi$ at time $t$ is a linear function of the total queue length at time $t$, namely
\begin{equation*}
  c^\pi(t) \; = \; \sum_{\brho\in\IndSet}c_{\brho} Q^\pi_{\brho}(t)
\end{equation*}
for the per-queue cost function constants $c_{\brho}$.
We are interested in the total discounted cost over an infinite horizon given by
\begin{equation*}
  J_{\beta}(\bq,\pi) \eqdef \sum_{t=0}^\infty \E[\beta^t\,c^\pi(t)],
\end{equation*}
with initial queue length vector $\bQ^\pi(0)= \bq$, discount factor $\beta\in(0,1)$, and $\bQ^\pi(t)$ following \eqref{eq:dynamics_of_Q}.

Observe from \eqref{eq:dynamics_of_Q} that $\bQ^\pi(t+1)$ is determined by $\bS^\pi(t)$, which is under the control of the scheduling policy.
A scheduling policy is called \emph{admissible} if the schedule $\bS^\pi(t)$ at time $t$ is based solely on information revealed up to time $t$,
such as $\bS^\pi(t')$, $\bQ^\pi(t'+1)$, and $\bA(t')$ for all $t'<t$.
It follows from known results in Markov decision process theory (see, e.g.,~\cite{Pute05,Bert12}) that there exists an optimal stationary Markov policy
in which $\bS^\pi(t)$ depends only on $\bQ^\pi(t)$ (and not even on time $t$),
and therefore we restrict our attention herein to such stationary Markov scheduling policies.
With a slight abuse of notation, we use $\bS^\pi(\bq)$ to denote the schedule under policy $\pi$ in state $\bq$.
Let $\cM$ denote the set of all stationary Markov policies.
Then, more formally, we seek to solve the scheduling optimization problem
\begin{equation}
  \label{prob:opt_J}
  \tag{$\textrm{P}_{\beta}$}
  \min_{\pi \in \cM} J_{\beta}(\bq,\pi)
\end{equation}
and find an optimal policy $\pi^*$ that achieves the minimum.

\subsection{Maximizing Service Rate}
\label{sec:alt_formulation}
The cost for each time period in problem~\eqref{prob:opt_J} depends on the current queue lengths which involve both the arrival and service processes.
Thus, instead of directly solving~\eqref{prob:opt_J},
we shall consider an equivalent problem that is based on a reward for maximizing the service rate, where
the reward only depends on the current queue lengths and the service action.
In particular, upon choosing schedule $\bs\in \cP$ with current queue length vector $\bq\in\Int_+^{|\IndSet|}$ and $|\IndSet|=4$,
the reward function $r:\Int_+^{4}\times \cP\to\Reals_+$ is defined by
\begin{equation*}
  r(\bq,\bs) \; \eqdef \; \sum_{\brho\in\IndSet}c_{\brho} s_{\brho} \cdot \indic_{\{q_{\brho}>0\}} .
\end{equation*}
The  corresponding discounted infinite horizon reward function under the stationary policy $\pi$ is defined as
\begin{align*}
  \tilde{J}_{\beta}(\bq, \pi)~&\eqdef~\E\left[ \sum_{t=0}^\infty \beta^t\,r(\bQ^\pi(t),\bS^\pi(t)) \right]
\end{align*}
where $\bQ^\pi(0)=\bq$ is the initial state.
Then we can construct an alternative optimization problem as follows:
\begin{equation}
  \label{prob:opt_tilde_J}
  \tag{$\tilde{\textrm{P}}_{\beta}$}
  \max_{\pi \in \cM} \tilde{J}_{\beta}(\bq, \pi).
\end{equation}

Next, we show that if there is an optimal (stationary) policy $\pi^*$ of \eqref{prob:opt_tilde_J}, then $\pi^*$ is an optimal policy of \eqref{prob:opt_J},
noting that a similar result was proved by
\cite{baras_two} and \cite{baras_general} for a very different parallel queueing system.
%
\begin{proposition}\label{prop:alt_formulation}
For any $\beta \in (0,1)$, any policy $\pi\in \cM$ that is an optimal solution for problem~\eqref{prob:opt_tilde_J} is also an optimal solution for problem~\eqref{prob:opt_J}, and vice versa.
\end{proposition}
\proof{Proof.}
From the queueing dynamics in~\eqref{eq:dynamics_of_Q} and the definition of the cost and reward functions, we have
\ba
c^\pi(t+1) = c^\pi(t)+\sum_{\brho\in\IndSet}c_{\brho} A_{\brho}(t)- r^\pi(t),
\ea
where $r^\pi(t) \eqdef r(\bQ^\pi(t),\bS^\pi(t))$.
Summing over $t$ and discounting with $\beta$ then yields
\begin{align*}
  J_\beta(\bq,\pi) &= c^\pi(0)+\beta \sum_{t=0}^\infty \E\left[ \beta^t c^\pi(t+1) \right]
= c^\pi(0)+\beta \sum_{t=0}^\infty \E\left[ \beta^t \left(c^\pi(t)+\sum_{\brho\in\IndSet}c_{\brho} A_{\brho}(t)- r^\pi(t)\right) \right] \\
  &= c^\pi(0)+\beta J_\beta(\bq,\pi)+g-\beta \tilde{J}_\beta(\bq,\pi),
\end{align*}
with
\begin{align}\label{eq:g-constant}
g=\sum_{t=0}^\infty\beta^{t+1}\E\left[ \sum_{\brho\in\IndSet}c_{\brho} A_{\brho}(t)\right]
\end{align}
which does not depend on the policy $\pi$ and is finite for all $\beta \in (0,1)$.
Hence, we obtain
$$(1-\beta)J_\beta(\bq,\pi)\; =\; c^\pi(0)+g-\beta \tilde{J}_\beta(\bq, \pi),$$
and thus any policy that minimizes $J_\beta$ also maximizes $\tilde{J}_\beta$.
\Halmos
\endproof

To solve problem \eqref{prob:opt_tilde_J}, we first express the associated Bellman equation as
\begin{equation*}
V(\bq)=\max_{\bs\in\cP}\left\{ r(\bq,\bs)+\beta\E[  V((\bq-\bs)^++\bA) ]  \right\}.
\end{equation*}
The optimal stationary policy is given by the maximizing schedule for each state $\bq$, which we solve using {\it value iteration}.
Let $V_{0}(\bq)=0$ for all $\bq\in\Ints_+^{4}$, and for the $(n+1)^{\text{th}}$ iteration, we define the \emph{$(n+1)^{\text{th}}$ value function} as
\begin{equation}
V_{n+1}(\bq):=\max_{\bs\in\cP}\left\{ r(\bq,\bs)+\beta\E[  V_n((\bq-\bs)^++\bA) ]  \right\}.
\label{eq:BellmanEq}
\end{equation}
In the next section, we state results on the properties of the value functions $V_n$ obtained using value iteration, as well as the schedules that achieve the maximum in the Bellman equation~\eqref{eq:BellmanEq}. 

\section{Optimal Scheduling}
\label{sec:optimal}
We now derive an optimal scheduling policy and related structural properties for our stochastic optimal control problem \eqref{prob:opt_J}
through the equivalent problem \eqref{prob:opt_tilde_J} above based on a reward for each period in terms of maximizing the number of packets served.
More specifically, we prove that an identified optimal policy solves the Bellman equation~\eqref{eq:BellmanEq} for any $n$, together with corresponding
structural properties, which implies that the policy renders solutions to both problems~\eqref{prob:opt_tilde_J} and \eqref{prob:opt_J}.
Our main results are established by solving \eqref{prob:opt_tilde_J} using value iteration over the decision space,
which we partition into three types of regions, namely the trivial boundary, the interior, and the critical boundary.
To summarize our results, the optimal scheduling policy coincides with the well known $c\mu$-rule in the interior;
while, in the trivial boundary, the optimal policy selects one of the schedules that can serve all the nonempty queues;
and, in the critical boundary, an optimal switching-curve policy will be followed, which can be reduced to the $c\mu$-rule in some cases. 
The proofs of our results are deferred until Section~\ref{sec:proofs}.

For notational convenience, let $\bfe_{\brho}$ represent the state in which only one packet exists in buffer $\brho$, for any $\brho\in\IndSet$, and all other buffers are empty.
%
We also write $\bmu\#\bnu$ when the two queues $\bmu,\bnu \in \IndSet$ cannot be contained in any schedule;
e.g., $(1,1)\#(1,2)$ and $(2,2)\#(2,1)$.

\subsection{Trivial Boundary}
\label{sec:trivial}
%
\begin{definition}
A state $\bq \in\mathbb Z_+^{4}$ is in the \emph{trivial boundary} if there exists $\bs\in \cP$ such that
\begin{equation}
  	q_{\brho}=0, \qquad \textrm{when $s_{\brho}=0$}. \label{eq:trivial_boundary}
\end{equation}
In other words, $\bs$ is a schedule that can serve packets in all nonempty queues in $\bq$.
\end{definition}
Our main result for the trivial boundary is expressed as follows.
\begin{theorem}\label{thm:trivial_boundary}
An optimal policy in every value iteration for $\bq$ in the trivial boundary is to choose a schedule $\bs\in\cP$ that satisfies \eqref{eq:trivial_boundary}.
\end{theorem}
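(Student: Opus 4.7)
The plan is to prove the theorem by induction on the value iteration index $n$, supported by a key lemma that bounds the marginal contribution of an extra packet to the value function. Intuitively, a schedule $\bs^*$ satisfying \eqref{eq:trivial_boundary} collects the maximum possible immediate reward, but also leaves fewer packets in the system than any other schedule $\bs$, so one must argue that the resulting loss in discounted future reward cannot offset this immediate gain. Since serving one packet in queue $(i,j)$ yields reward $c_{ij}$, the natural bound is that the value function grows by at most $c_{ij}$ per extra packet in queue $(i,j)$; combined with the discount factor $\beta < 1$, this makes the trade-off go in favor of $\bs^*$.

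First I would establish the following lemma by induction on $n$: for every $\bq \in \Int_+^{|\IndSet|}$ and every $(i,j) \in \IndSet$,
\begin{equation*}
V_n(\bq + \be_{ij}) - V_n(\bq) \;\leq\; c_{ij},
\end{equation*}
where $\be_{ij}$ denotes the unit vector in coordinate $(i,j)$. The base case $n=0$ is immediate since $V_0 \equiv 0$. For the inductive step, I would let $\bs$ achieve the max in the Bellman equation \eqref{eq:BellmanEq} for $V_{n+1}(\bq + \be_{ij})$; plugging the same $\bs$ into the Bellman inequality at $\bq$ gives $V_{n+1}(\bq) \geq r(\bq, \bs) + \beta\,\E[V_n((\bq - \bs)^+ + \bA)]$. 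A short case analysis on whether $q_{ij} = 0$ and whether $s_{ij} = 1$ shows that either the reward increment $r(\bq + \be_{ij}, \bs) - r(\bq, \bs)$ equals $c_{ij}$ and the post-service state increment $(\bq + \be_{ij} - \bs)^+ - (\bq - \bs)^+$ vanishes (the case $q_{ij} = 0$, $s_{ij} = 1$), or the reward increment is $0$ and the state increment equals $\be_{ij}$. In both cases the inductive hypothesis (applied after adding the arrival vector $\bA$) together with $\beta < 1$ yields $V_{n+1}(\bq + \be_{ij}) - V_{n+1}(\bq) \leq c_{ij}$.

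Given the lemma, fix $\bq$ in the trivial boundary and let $\bs^* \in \cP$ be any schedule satisfying \eqref{eq:trivial_boundary}; let $\bs \in \cP$ be any competing schedule. Writing $\cL(\bs) \eqdef \{(i,j) : q_{ij} > 0,\ s_{ij} = 0\}$, the defining property of $\bs^*$ directly yields
\begin{equation*}
r(\bq, \bs^*) - r(\bq, \bs) \;=\; \sum_{(i,j) \in \cL(\bs)} c_{ij}, \qquad (\bq - \bs)^+ - (\bq - \bs^*)^+ \;=\; \sum_{(i,j) \in \cL(\bs)} \be_{ij}.
\end{equation*}
Telescoping the lemma along the unit vectors in $\cL(\bs)$ (after adding $\bA$) gives
\begin{equation*}
\E[V_n((\bq - \bs)^+ + \bA)] - \E[V_n((\bq - \bs^*)^+ + \bA)] \;\leq\; \sum_{(i,j) \in \cL(\bs)} c_{ij}.
\end{equation*}
Combining these two bounds, the Bellman value under $\bs^*$ exceeds that under $\bs$ by at least $(1-\beta)\sum_{(i,j)\in\cL(\bs)} c_{ij} \geq 0$, so $\bs^*$ attains the maximum in \eqref{eq:BellmanEq} at every value iteration, which is the claim.

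The main obstacle is the lemma itself, rather than the comparison argument that follows it. The naive ``serve as much as you can'' intuition is complicated by the fact that choosing $\bs^*$ strictly shrinks the post-service state, so a priori the residual value function could more than compensate for the immediate reward loss incurred by $\bs$. The tight bound $c_{ij}$, attained precisely in the case $q_{ij}=0,\ s_{ij}=1$, is what makes the induction close cleanly; any weaker per-packet bound would fail to yield the required inequality.
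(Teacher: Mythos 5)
Your proposal is correct and follows essentially the same route as the paper: your key lemma is exactly the paper's Proposition~3.2 (the per-packet bound $V_n(\bq+\be_{ij})-V_n(\bq)\leq c_{ij}$, proved by the same induction and the same case split on $s_{ij}$ and $q_{ij}$), and the comparison of $\bs^*$ against a competing schedule via telescoping over the set of unserved nonempty queues is the paper's derivation of the theorem from that proposition. The only cosmetic difference is that you state the lemma without the $\beta$ factor appearing in the paper's Proposition~3.2, which is in fact the stronger form the paper's own induction establishes.
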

This theorem can be derived from the following proposition.
\begin{proposition}\label{prop:trivial_boundary}
Any value function $V_n$ from the value iteration satisfies
\begin{equation}
	\beta V_n(\bq+\bfe_{\brho})\leq \beta V_n(\bq)+c_{\brho} , \label{eq:prop1:result}
\end{equation}
for any $\bq \in\Int_+^{4}$ and $\brho\in\IndSet$.
\end{proposition}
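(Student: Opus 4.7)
The plan is to proceed by induction on $n$, using the Bellman recursion \eqref{eq:BellmanEq} together with a case analysis of how the added unit at queue $(i,j)$ propagates through the dynamics.

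For the base case $n=0$ we have $V_0\equiv 0$, so the inequality reduces to $0\le c_{ij}$, which holds. For the inductive step, fix $\bq\in\Int_+^{|\IndSet|}$ and $(i,j)\in\IndSet$, and let $\bs^{\ast}\in\cP$ denote a maximizer in the Bellman equation for $V_{n+1}(\bq+\be_{ij})$. Since $\bs^{\ast}$ is also feasible at state $\bq$, we have $V_{n+1}(\bq) \ge r(\bq,\bs^{\ast}) + \beta\E[V_n((\bq-\bs^{\ast})^{+} + \bA)]$, and subtracting this from the Bellman identity at $\bq+\be_{ij}$ yields
\[
V_{n+1}(\bq+\be_{ij}) - V_{n+1}(\bq) \;\le\; \Delta r + \beta\,\E\bigl[V_n(\bY+\bA) - V_n(\bX+\bA)\bigr],
\]
where $\Delta r := r(\bq+\be_{ij},\bs^{\ast}) - r(\bq,\bs^{\ast})$, $\bY := (\bq+\be_{ij}-\bs^{\ast})^{+}$ and $\bX := (\bq-\bs^{\ast})^{+}$.

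A direct computation from the definition of $r$ gives $\Delta r = c_{ij}\, s^{\ast}_{ij}\, \mathbb{I}_{\{q_{ij}=0\}}$, and a coordinate-wise inspection of $(\cdot)^{+}$ shows that $\bY = \bX + \be_{ij}$ whenever $s^{\ast}_{ij}=0$ or $q_{ij}>0$, while $\bY = \bX$ in the one remaining case $s^{\ast}_{ij}=1$ and $q_{ij}=0$. In the first of these two alternatives the reward difference vanishes, and applying the inductive hypothesis pointwise at the random state $\bX+\bA$ gives $\beta\,\E[V_n(\bY+\bA) - V_n(\bX+\bA)] \le c_{ij}$; in the second the expectation term vanishes and $\Delta r = c_{ij}$. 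In either event $V_{n+1}(\bq+\be_{ij}) - V_{n+1}(\bq) \le c_{ij}$, and multiplying by $\beta\in(0,1)$ delivers the claimed inequality.

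The main obstacle is essentially the careful bookkeeping in the case analysis of the $(\cdot)^{+}$ operation and the indicator in $r$: one has to verify that adding a single packet at $(i,j)$ perturbs the post-decision state by exactly $\be_{ij}$ unless the extra packet is scheduled out of an otherwise-empty queue, in which case both the state perturbation disappears and the immediate reward grows by $c_{ij}$. Once this dichotomy is in hand the inductive hypothesis closes the argument, and the factor $\beta$ in the conclusion is simply the discount needed to rewrite the inequality in a form that plugs directly into the Bellman equation when deriving Theorem~\ref{thm:trivial_boundary}.
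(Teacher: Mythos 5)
Your proof is correct and follows essentially the same route as the paper's: induction on $n$ via the Bellman recursion, with the identical dichotomy on whether the extra packet at $(i,j)$ is scheduled out of an otherwise-empty queue (shifting the post-decision state by $\be_{ij}$ versus absorbing it into the reward as $c_{ij}$). The only cosmetic difference is that you compare against the maximizer for $V_{n+1}(\bq+\be_{ij})$ and use its feasibility at $\bq$, whereas the paper proves the per-schedule inequality for every $\bs\in\cP$ and then takes the maximum on both sides; these are the same argument.
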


\subsection{Interior Region}
\label{sec:interior}
%
Define
\begin{equation*}
  r_{\max} \; \eqdef \; \max \{ r(\bq,\bs)\,:\,\bq\in\Int_+^{4},\, \bs\in \cP\}.
\end{equation*}
\begin{definition}
A state $\bq$ is an \emph{interior point} if 
\begin{eqnarray}
\max\{\, r(\bq,\bs)\,:\,\bs\in \cP\, \} \; = \; r_{\max},
\end{eqnarray}
and the \emph{interior region} comprises the set of all interior points.
\end{definition}

The following theorem identifies an optimal scheduling policy for the interior region,
rendering the $c\mu$ policy to be optimal.
\begin{theorem}\label{thm:interior}
An optimal schedule in any value iteration on an interior point $\bq$ is a schedule $\bs\in\cP$ such that $r(\bq,\bs)=r_{\max}$.
\end{theorem}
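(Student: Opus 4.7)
The plan is to proceed by induction on the value iteration index $n$. The base case is immediate: with $V_0 \equiv 0$, the first iteration yields $V_1(\bq) = \max_{\bs \in \cP} r(\bq,\bs)$, which at an interior point is realized by any schedule with $r(\bq,\bs) = r_{\max}$. For the inductive step, consider $V_{n+1}$ at an interior point $\bq$. Since $|\cP|=2$ for a $2\times 2$ switch, write $\bs^* \in \arg\max_{\bs\in\cP} r(\bq,\bs)$ (so $r(\bq,\bs^*) = r_{\max}$) and let $\bs$ denote the other schedule. WLOG take $\bs^* = \be_{11}+\be_{22}$, so that $r_{\max} = c_{11}+c_{22}$ and interiority forces $q_{11},q_{22}\geq 1$. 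Introducing $a := \mathbb{I}_{\{q_{12}>0\}}$ and $b := \mathbb{I}_{\{q_{21}>0\}}$, the inductive step reduces to establishing
\[
c_{11}+c_{22}-a\,c_{12}-b\,c_{21} \;\geq\; \beta\, \E\!\left[ V_n(\bq+\bA-a\be_{12}-b\be_{21}) - V_n(\bq+\bA-\be_{11}-\be_{22})\right].
\]
The left-hand side is nonnegative by the WLOG assumption $c_{11}+c_{22}\geq c_{12}+c_{21}$.

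To bound the right-hand side, I would set $\by := \bq + \bA - \be_{11} - \be_{22}$ (a valid state since $q_{11},q_{22}\geq 1$), rewriting the expectand as $V_n(\by + \be_{11}+\be_{22}-a\be_{12}-b\be_{21}) - V_n(\by)$. Applying Proposition~\ref{prop:trivial_boundary} twice immediately gives the coarse bound $\beta V_n(\by+\be_{11}+\be_{22}) - \beta V_n(\by) \leq c_{11}+c_{22}$, and combining with the monotonicity property $V_n(\by+\bu)\geq V_n(\by)$ for $\bu\geq \zero$ (an easy separate induction using the Bellman recursion) handles the case $a = b = 0$ outright. The substantive work is to sharpen the bound by exactly the $a\,c_{12} + b\,c_{21}$ term when $a + b > 0$.

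The main obstacle is closing this sharpening gap, since Proposition~\ref{prop:trivial_boundary} provides only a one-sided Lipschitz bound with no matching lower bound for decrements. To handle this, I would strengthen the inductive hypothesis to carry a two-sided \emph{mixed Lipschitz} inequality of the form
\[
\beta V_n(\by + \be_{11}+\be_{22} - a\be_{12}-b\be_{21}) - \beta V_n(\by) \;\leq\; (c_{11}+c_{22}) - (a\,c_{12}+b\,c_{21})
\]
for all $\by$ with $y_{12}\geq a$ and $y_{21}\geq b$ and all $a,b\in\{0,1\}$, and to propagate this inequality through the Bellman recursion in parallel with the main claim. Propagation is delicate: after substituting the recursion on both sides, one must verify the inequality by case analysis according to the type of point that each of $\by$ and $\by+\be_{11}+\be_{22}-a\be_{12}-b\be_{21}$ is (interior, trivial boundary, or critical boundary), while avoiding any circular appeal to the later critical-boundary optimality result of Section~\ref{sec:optimal}. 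An alternative route I would also pursue is an infinite-horizon sample-path interchange: couple the two policies ``use $\bs^*$ at time $0$, optimal thereafter'' and ``use $\bs$ at time $0$, optimal thereafter'' on a common arrival stream, and exploit the inductive interior-optimality hypothesis at later times to argue that the extra discounted reward at time $0$ from $\bs^*$ dominates the net discounted discrepancy generated by the coupled state difference $\be_{11}+\be_{22}-a\be_{12}-b\be_{21}$.
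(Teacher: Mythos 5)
Your setup is sound and matches the skeleton of the paper's argument: induction on $n$, reduction of the inductive step at an interior $\bq$ (with $\bs^*=\be_{11}+\be_{22}$, $q_{11},q_{22}\ge 1$) to the inequality $c_{11}+c_{22}-a\,c_{12}-b\,c_{21}\ge\beta\,\E[V_n(\bq+\bA-a\be_{12}-b\be_{21})-V_n(\bq+\bA-\be_{11}-\be_{22})]$, and the disposal of $a=b=0$ by two applications of Proposition~\ref{prop:trivial_boundary}. But the substantive case $a+b>0$ is not actually proved. You reduce it to a ``two-sided mixed Lipschitz'' inequality whose inductive propagation you explicitly leave open, and that inequality is not an auxiliary estimate: for $a=b=1$ it is a pointwise-in-$\by$ strengthening of the very statement being proved, and for $a+b=1$ it is precisely inequality \eqref{eq:three_queue} of Proposition~\ref{prop:three_queue}, which needs the cost assumption $c_{12}\le c_{11}+c_{22}$ and its own nontrivial induction (in the paper this subcase is handled by the separate critical-boundary $c\mu$ result, not by the interior argument). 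So as written the proposal replaces the theorem by a statement at least as strong and stops there; the sample-path interchange alternative is likewise only sketched. This is a genuine gap.

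The paper closes the hard case without any two-sided Lipschitz bound, via an interchange through a \emph{common continuation state} — essentially the rigorous form of the coupling you gesture at in your last sentence. Suppose $\bs\#\bs'$ with both $\bs,\bs'\le\bq$ (your $a=b=1$) and $r(\bq,\bs)=r_{\max}$. Then $\bq-\bs'$ is still an interior point, so by the induction hypothesis the optimal action there is $\bs$, giving $V_{k+1}(\bq-\bs')=r(\bq,\bs)+\beta\,\E[V_k(\bq-\bs-\bs'+\bA)]$, while the Bellman equation at $\bq-\bs$ gives $V_{k+1}(\bq-\bs)\ge r(\bq,\bs')+\beta\,\E[V_k(\bq-\bs-\bs'+\bA)]$. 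The two continuation states coincide, so subtracting yields $V_{k+1}(\bq-\bs')-V_{k+1}(\bq-\bs)\le r(\bq,\bs)-r(\bq,\bs')$, which (using $\beta\le 1$ and $r(\bq,\bs)\ge r(\bq,\bs')$) is exactly inequality \eqref{eq:interior} of Proposition~\ref{prop:interior} at level $k+1$; the theorem then follows by applying this at $\bq+\bA$. No strengthened hypothesis is needed, and this is the step your proposal is missing.
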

A crucial fact, which will be a key step for proving the theorem, is the following inequality that the value iteration function $V_n$ satisfies.
\begin{proposition}\label{prop:interior}
Let $\bq\in\Int_+^{4}$ be an interior point and $\bs\in\cP$ a schedule such that $r(\bq,\bs)=r_{\max}$.
Then, for any value function
in the value iteration,
and any schedule $\bs^\prime\in\cP$ with $\bs^\prime\leq \bq$, we have
\begin{align}
	r(\bq,\bs)+\beta V_n(\bq-\bs)\geq r(\bq,\bs^\prime)+\beta V_n( \bq-\bs^\prime ).
\label{eq:interior}
\end{align}	
\end{proposition}

\subsection{Critical Boundary}

We refer to the remaining region of the decision space as the critical boundary, and discuss two different cases for the optimal policy.

\subsubsection{Critical Boundary I: When \boldmath{\normalsize $c\mu$} is Optimal.}
\label{sec:critical-cmu}
We start by considering the case where only one buffer is empty and the $c\mu$ policy is optimal, as in the interior region.
%
\begin{theorem}\label{thm:three_queue}
Let $\IndSet=\{\bmu,\bnu,\brho,\bomega\}$ where $\bmu\#\bomega$ and $\bmu\#\brho$.
Further assume $c_{\bmu}\leq c_{\brho}+c_{\bomega}\leq c_{\bmu}+c_{\bnu}$, 
and let the state $\bq$ be such that $q_{\bnu}=0$, with all other queues nonempty.
Then, the optimal action on state $\bq$ is to serve packets in queues $\brho$ and $\bomega$ in any value iteration.
\end{theorem}
The above statement follows from the following proposition on the value function $V_n$.
\begin{proposition}\label{prop:three_queue}
Let $\IndSet=\{\bmu,\bnu,\brho,\bomega\}$ where $\bmu\#\bomega$ and $\bmu\#\brho$.
Assume that $c_{\bmu}\leq c_{\brho}+c_{\bomega}\leq c_{\bmu}+c_{\bnu}$. 
Then, for any
value function $V_n$ from the value iteration, we have
\begin{equation}
c_{\brho}+c_{\bomega}+\beta V_n(\bq+\bfe_{\bmu}) \geq c_{\bmu}+\beta V_n(\bq+\bfe_{\brho}+\bfe_{\bomega})
\label{eq:three_queue}
\end{equation}
for any $\bq\in\Int_+^{4}$.
\end{proposition}

\subsubsection{Critical Boundary II: When Switching Curve is Optimal.}
\label{sec:critical-switchcurve}
%
Now we consider the remainder of the critical boundary cases and show that an optimal policy of any value function has a switching curve structure.
This switching curve structure is characterized in Theorem~\ref{thm:switching_curve}, which defines regions of optimal actions that depend upon the state of the system and that are based on the corresponding value function inequalities in Proposition~\ref{PROP:SWITCHING_CURVE}.
\begin{theorem}\label{thm:switching_curve}
Fix a state $\bq\in\Int_+^{4}$ and $\bmu,\bnu\in\IndSet$.
In any value iteration, if an optimal action on $\bq$ is to serve queues $\bmu$ and $\bnu$ simultaneously,
then this is an optimal action on $\bq+\bfe_{\bmu}$ and $\bq+\bfe_{\bnu}$.
Therefore, in that value iteration, an optimal action on $\bq^\prime$ is to serve queues $\bmu$ and $\bnu$ if $q^\prime_{\bmu} \geq q_{\bmu}$,
$q^\prime_{\bnu}\geq q_{\bnu}$, and $q^\prime_{\brho}= q_{\brho}$ for all $\brho\in\IndSet$ such that $\brho\#\bmu$.
\end{theorem}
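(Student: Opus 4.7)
In the $2\times 2$ switch there are exactly two schedules, $\bs_{\bmu\bnu}=\be_\bmu+\be_\bnu$ and $\bs_{\brho\bomega}=\be_\brho+\be_\bomega$, so the optimal action in the $(n{+}1)$-th value iteration at $\bq$ is determined by the sign of the advantage
\begin{equation*}
\Delta_n(\bq)\;:=\;\big[r(\bq,\bs_{\bmu\bnu})+\beta\,\E\, V_n((\bq-\bs_{\bmu\bnu})^++\bA)\big]-\big[r(\bq,\bs_{\brho\bomega})+\beta\,\E\, V_n((\bq-\bs_{\brho\bomega})^++\bA)\big].
\end{equation*}
Serving $\{\bmu,\bnu\}$ is optimal iff $\Delta_n(\bq)\ge 0$. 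The entire theorem---both S1 and the ``furthermore'' clause---reduces to the single monotonicity statement that $\Delta_n$ is nondecreasing in each of $q_\bmu,q_\bnu$ and nonincreasing in each of $q_\brho,q_\bomega$. Given this, any $\bq'$ satisfying $q'_\bmu\ge q_\bmu$, $q'_\bnu\ge q_\bnu$, $q'_\brho\le q_\brho$, $q'_\bomega\le q_\bomega$ is reachable from $\bq$ by a sequence of unit coordinate moves along which $\Delta_n$ never decreases, so $\Delta_n(\bq')\ge\Delta_n(\bq)\ge 0$.

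I would prove these four monotonicities of $\Delta_n$ by induction on the value-iteration index $n$. The reward component of $\Delta_n$ is trivially monotone in the required sense, since incrementing $q_\bmu$ can only flip $\mathbb{I}_{\{q_\bmu>0\}}$ from $0$ to $1$ (favoring $\bs_{\bmu\bnu}$) while leaving $r(\cdot,\bs_{\brho\bomega})$ unchanged, and symmetrically for the other three coordinates. In the interior of the state space, where $(\bq+\be_\bmu-\bs)^+=(\bq-\bs)^++\be_\bmu$ holds for both schedules, the value-function part of the monotonicity in $q_\bmu$ reduces after simplification to
\begin{equation*}
V_n(\by+\be_\bmu)-V_n(\by)\;\ge\;V_n(\by+\be_\bmu+\bv)-V_n(\by+\bv),\qquad \bv\,:=\,\be_\bmu+\be_\bnu-\be_\brho-\be_\bomega,
\end{equation*}
at every base state $\by$, together with three sign-symmetric analogues for $q_\bnu,q_\brho,q_\bomega$. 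This is a discrete concavity statement asserting that the marginal benefit of one additional $\bmu$-packet shrinks as the state shifts along $\bv$ from the $\bs_{\brho\bomega}$-face toward the $\bs_{\bmu\bnu}$-face. I would take the conjunction of this inequality and its three variants as the inductive hypothesis on $V_n$, verify it trivially for $V_0\equiv 0$, and propagate it through the Bellman recursion \eqref{eq:BellmanEq}.

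The main obstacle is passing these structural inequalities through the $\max$ in \eqref{eq:BellmanEq}: the maximum of two supermodular functions is not in general supermodular, so the inductive step requires a finite case analysis on which schedule attains the maximum at each of the four states appearing in the inequality. Because only two schedules exist in the $2\times 2$ switch and they are coordinate-disjoint, this case analysis is tractable and each case reduces to the inductive hypothesis via the structural relations among $\bmu,\bnu,\brho,\bomega$. A secondary difficulty arises on the boundary where some $q_{ij}=0$ and the shift identity above fails; in those exceptional states I would appeal directly to Theorem~\ref{thm:trivial_boundary} and Proposition~\ref{prop:three_queue}, which already fix the optimal schedule on the relevant boundary components and allow the required inequalities to be verified by inspection. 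With these pieces assembled, the inductive step closes, $\Delta_n$ has the asserted monotonicities, and both S1 and the ``furthermore'' clause follow.
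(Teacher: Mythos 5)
Your overall strategy --- reduce both claims to coordinatewise monotonicity of the advantage function $\Delta_n$, prove structural inequalities on $V_n$ by induction through the Bellman recursion, and resolve the mixed-argmax cases by a finite case analysis --- is the same strategy the paper follows: Proposition~\ref{PROP:SWITCHING_CURVE} is exactly such a package of inequalities, propagated by induction with the help of the auxiliary inequalities \eqref{eq:for_two}--\eqref{eq:for_three_same} and the exchange Lemma~\ref{LEMMA:AUX_INEQUALITY}. The gap lies in your choice of inductive hypothesis and in your treatment of the boundary, and it is not cosmetic.

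The concavity inequality you adopt as the inductive hypothesis,
\begin{equation*}
V_n(\by+\be_{\bmu})-V_n(\by)\;\ge\;V_n(\by+\be_{\bmu}+\bv)-V_n(\by+\bv),\qquad \bv=\be_{\bmu}+\be_{\bnu}-\be_{\brho}-\be_{\bomega},
\end{equation*}
is the correct reduction of ``$\Delta_n$ is nondecreasing in $q_{\bmu}$'' only at states where every truncation $(\cdot)^+$ is inactive, i.e., where all four queues are nonempty. But such a state is an \emph{interior point} in the paper's sense (both schedules attain their full reward, so one of them attains $r_{\max}$), and there Theorem~\ref{thm:interior} already fixes the optimal action; the monotonicity of $\Delta_n$ is not needed. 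The content of Theorem~\ref{thm:switching_curve} lies on the critical boundary, where at least one queue is empty. There the post-action states under the two schedules differ not by $\bv$ but by one of $\be_{\bmu}-\be_{\brho}$, $\be_{\bmu}+\be_{\bnu}-\be_{\brho}$, or $\be_{\bmu}-\be_{\brho}-\be_{\bomega}$, according to which queues are empty (cases {\bf C1}--{\bf C3} in the paper), and the inequalities actually required are \eqref{eq:two_queues}, \eqref{eq:three_queues_other} and \eqref{eq:three_queues_same}; none of these follows from your single full-shift inequality. Your proposed remedy for the boundary --- appealing to Theorem~\ref{thm:trivial_boundary} and Proposition~\ref{prop:three_queue} --- does not close this gap: the trivial boundary covers only states whose nonempty queues fit inside one schedule, and Proposition~\ref{prop:three_queue} covers only the single-empty-queue configuration under a specific ordering of the costs. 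Neither determines the optimal action at, say, a {\bf C1} state ($q_{\bmu},q_{\brho}\ge1$, $q_{\bnu}=q_{\bomega}=0$), which is precisely where the switching curve lives and where the theorem has content. To repair the argument you must replace your single hypothesis by the family \eqref{eq:two_queues}--\eqref{eq:three_queues_same}, one inequality per boundary configuration, and you will then also find that pushing them through the Bellman maximum in the mixed-argmax case forces you to carry along further inequalities of the type \eqref{eq:for_two}--\eqref{eq:for_three_same} together with a rearrangement result such as Lemma~\ref{LEMMA:AUX_INEQUALITY}.
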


\begin{remark}\label{rem:switching_curve}
While this theorem is applicable for any $\bq$, the results of Theorem~\ref{thm:switching_curve} simply coincide with the above results for the interior region in Section~\ref{sec:interior}, the trivial boundary in Section~\ref{sec:trivial}, and the critical boundary in Section~\ref{sec:critical-cmu}
under the corresponding conditions.
Our use of Theorem~\ref{thm:switching_curve} in this section is to establish the optimal switching curve structure for the critical boundary when the conditions of
Section~\ref{sec:critical-cmu}
do not hold.
\end{remark}

To establish Theorem~\ref{thm:switching_curve} on a switching curve structure for the relevant portion of the critical boundary,
\begin{proposition}\label{PROP:SWITCHING_CURVE}
For every $n\in \Int_+$, the $n$-th value function satisfies the following inequalities:
For any $\bq\in\Int_+^{4}$,
\begin{align}
V_n(\bq+\bfe_{\bmu}+\bfe_{\brho})+V_n(\bq+\bfe_{\bmu}) & \geq V_n(\bq+2\bfe_{\bmu})+V_n(\bq+\bfe_{\brho}), \label{eq:two_queues}\\
V_n(\bq+\bfe_{\bmu}+\bfe_{\brho})+V_n(\bq+\bfe_{\bmu}+\bfe_{\bnu}) & \geq V_n(\bq+2\bfe_{\bmu}+\bfe_{\bnu})+V_n(\bq+\bfe_{\brho}), \label{eq:three_queues_other}\\
V_n(\bq+\bfe_{\bmu}+\bfe_{\brho}+\bfe_{\bomega})+V_n(\bq+\bfe_{\bmu}) & \geq V_n(\bq+2\bfe_{\bmu})+V_n(\bq+\bfe_{\brho}+\bfe_{\bomega}), \label{eq:three_queues_same}
\end{align}
where $\bmu,\brho,\bomega\in\IndSet$, $\bmu\# \brho$, $\bmu\# \bomega$, and $\brho\neq \bomega$.
\end{proposition}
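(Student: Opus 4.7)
The plan is to prove all three inequalities simultaneously by induction on $n$. The base case $n=0$ is immediate since $V_0\equiv 0$ makes every term vanish. For the inductive step, assume $V_n$ satisfies all of (\ref{eq:two_queues})--(\ref{eq:three_queues_same}), and derive the same inequalities for $V_{n+1}$ via the Bellman recursion (\ref{eq:BellmanEq}). Throughout, I will exploit the fact that the arrival vector $\bA$ is common to all four states appearing in any given inequality, so after taking $\bA$ outside the conditional expectation the problem reduces to an inequality on $V_n$ evaluated at four shifted, post-decision states.

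For a generic inequality, let $\bs^\star$ and $\tilde{\bs}^\star$ denote schedules attaining the maxima defining the two $V_{n+1}$ terms on the LHS, and let $\bs$, $\tilde{\bs}$ be candidate (not necessarily optimal) schedules for the two $V_{n+1}$ terms on the RHS. Expanding each $V_{n+1}$ via (\ref{eq:BellmanEq}), it suffices to choose $\bs$ and $\tilde{\bs}$ so that (i) the sum of immediate rewards on the LHS dominates the sum on the RHS, and (ii) the four resulting post-decision states $(\bq'-\text{schedule})^+ + \bA$ pair up into an instance of one of (\ref{eq:two_queues})--(\ref{eq:three_queues_same}) for $V_n$. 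The natural first attempt is to set $\bs=\bs^\star$ and $\tilde{\bs}=\tilde{\bs}^\star$. When both LHS optimal schedules treat the ``extra'' packets $\be_{\bmu}$ and $\be_{\brho}$ consistently, the four post-decision states on the two sides differ by exactly the pattern of the same inequality, shifted by the common $\bA$, and the inductive hypothesis closes the argument after taking expectations over $\bA$.

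The delicate cases occur when $\bs^\star$ and $\tilde{\bs}^\star$ split the extra packets asymmetrically: for instance, $\bs^\star$ serves $\brho$ but $\tilde{\bs}^\star$ does not serve $\bmu$, or vice versa. In such subcases the post-decision states realign so as to require one of the \emph{other} two inequalities of the proposition on $V_n$, which is precisely the reason all three must be carried through the induction together. Any residual immediate-reward deficit coming from the RHS schedule attempting to serve an empty queue (a mismatch that can occur because the RHS states are componentwise smaller than the LHS states in some coordinates) is absorbed by invoking Proposition~\ref{prop:trivial_boundary}: the bound $\beta\,V_n(\bq+\be_{ij})\leq \beta\,V_n(\bq)+c_{ij}$ of (\ref{eq:prop1:result}) lets us trade a missing unit of immediate reward for a one-packet shift in the value-function argument at step $n$.

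The main obstacle is the combinatorial case analysis. Even though $\cP$ has only two elements in the $2\times 2$ switch, the pair $(\bs^\star,\tilde{\bs}^\star)$ together with the four possible configurations of which of $\bmu,\bnu,\brho,\bomega$ are empty or nonempty in the underlying $\bq$ produces a nontrivial branching, and the correct inductive hypothesis to invoke depends on the branch. Organizing these subcases so that each either reduces to one of (\ref{eq:two_queues})--(\ref{eq:three_queues_same}) on $V_n$ or is controlled by (\ref{eq:prop1:result}), without any circular dependence among the three inequalities within a single value iteration, is where the bulk of the bookkeeping lies.
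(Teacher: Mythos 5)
Your overall skeleton (joint induction on $n$ over the three inequalities, expansion via the Bellman recursion \eqref{eq:BellmanEq}, case analysis on optimal actions, and use of Proposition~\ref{prop:trivial_boundary} to absorb immediate-reward mismatches at empty queues) matches the paper's strategy, but two things prevent your argument from closing as described. First, a directional issue: to prove $V_{k+1}(\cdot)+V_{k+1}(\cdot)\geq V_{k+1}(\cdot)+V_{k+1}(\cdot)$ one must take the \emph{actual optimizers} of the two right-hand-side maxima (hence the case analysis over what those optimizers can be) and exhibit feasible, possibly suboptimal, schedules for the two left-hand-side terms that dominate. You set it up the other way around --- optimal schedules $\bs^\star,\tilde{\bs}^\star$ for the LHS and candidates for the RHS --- and then ``it suffices to choose $\bs,\tilde{\bs}$ so that the LHS dominates'' is false, since the RHS maxima may be attained by schedules other than your candidates. (The paper additionally invokes statement {\bf S1} at step $k+1$ to prune the admissible pairs of RHS optimal actions down to three cases.)

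Second, and more fundamentally, an induction hypothesis consisting only of \eqref{eq:two_queues}--\eqref{eq:three_queues_same} does not close. The paper must strengthen it with three further inequalities carried through the same induction, e.g. $2V_n(\bq+\be_{\bmu})\geq V_n(\bq)+V_n(\bq+2\be_{\bmu})$ and its relatives \eqref{eq:for_two}--\eqref{eq:for_three_same}; these are exactly what is needed when, say, both RHS optimal actions serve $\brho$ and $\bomega$ with $q_{\brho}=0$, so the post-decision states realign into a pattern outside your three inequalities. Moreover, the ``mixed'' case in which the two RHS states have \emph{different} optimal actions is not reducible to any of these six inequalities plus \eqref{eq:prop1:result}; the paper handles it with a separate exchange lemma (Lemma~\ref{LEMMA:AUX_INEQUALITY}: if $\bx\leq\be_{\bmu}+\be_{\bnu}$, $\by\leq\be_{\brho}+\be_{\bomega}$ and $\bx+\by=\bz+\bw$, then $V_n(\bq+\bx)+V_n(\bq+\by)\geq V_n(\bq+\bz)+V_n(\bq+\bw)$), which requires its own induction. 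Your claim that the asymmetric subcases reduce to ``one of the other two inequalities of the proposition'' is precisely where the bookkeeping would break down.
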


\subsection{Identifying the Optimal Policy}

Theorems~\ref{thm:interior}~--~\ref{thm:switching_curve}
establish that an optimal scheduling policy follows the $c\mu$ rule in the interior region and in the trivial boundary while generally having a switching curve structure
in the critical boundary.
Hence, upon identifying the switching curve for the critical boundary, we have complete information about our optimal scheduling policy.
In the following, we first identify the precise switching curve in the special case of symmetric arrivals and unit costs, and then we propose an approximation algorithm
for the general case which is shown to be asymptotically optimal.

\subsubsection{Symmetric Arrivals and Unit Costs.}
\label{sec:optimal:symmetric}
%
Assume that $c_{\brho}=1$ and the arrival processes have the same rate
$\lambda_{\brho}=\lambda$, for all $\brho\in\IndSet$ and all $t$.
In this case, we further assume that $\lambda < 1/2$ to ensure the load is within the capacity region $\cC$;
we also have from \eqref{eq:g-constant} in this case that $g=4\lambda\sum_{t=0}^{\infty} \beta^{t+1}=4\lambda\beta/(1-\beta)$.

For this symmetric case, the interior region comprises all states in which the queues $(1,1)$ and $(2,2)$ or the queues $(1,2)$ and $(2,1)$ are nonempty
(i.e., the states in which the system can transmit two packets), whereas the trivial boundary comprises states with only one nonempty queue.
The critical boundary consists of the states in which there are two nonempty queues but only one packet can be transmitted.
We then have the following explicit characterization of a scheduling algorithm that we then prove to be optimal.

{\bf Algorithm~1.}~~{\it For the $2 \times 2$ input-queued switch with symmetric arrivals and unit costs, we define the \emph{size} of a schedule $\bs \in \cP$
to be the number of non-empty queues included in that schedule.
In every slot $t$, a schedule is then chosen in the following order:
\begin{enumerate}
\item[(i)] Select a size-$2$ schedule, if it exists, with ties broken according to an arbitrary well-defined (possibly randomized) rule;
\item[(ii)] Otherwise, if there are multiple (two) size-$1$ schedules, then the longest queue among them is served;
\item[(iii)] Otherwise, the queue of the unique size-$1$ schedule is served.
\end{enumerate}
}

To prove the optimality of Algorithm~1, we need the following proposition, which uses the i.i.d. assumption on arrivals.
\begin{proposition}\label{PROP:SYMMETRIC}
Any value function $V_n$ from the value iteration satisfies
\begin{equation}\label{eq:symmetric}
V_n(x\,\bfe_{\brho}+y\,\bfe_{\bomega}+z\,\bfe_{\bmu}+w\,\bfe_{\bnu})=V_n(z\,\bfe_{\brho}+w\,\bfe_{\bomega}+x\,\bfe_{\bmu}+y\,\bfe_{\bnu}),
\end{equation}	
where $\IndSet=\{\brho,\bomega,\bmu,\bnu\}$ with $\brho\#\bmu$, $\brho\#\bnu$ and $(x,y,z,w)\in\Ints_+^4$.
\end{proposition}

The above proposition, together with Theorems~\ref{thm:interior}, \ref{thm:trivial_boundary} and \ref{thm:switching_curve},
is shown in Theorem~\ref{thm:optimal} to identify the optimal actions in any value iteration.
%

%

\begin{theorem}\label{thm:optimal}
For $2\times 2$ input-queued switches with symmetric arrivals and unit costs, Algorithm~1 is optimal and
minimizes the discounted infinite horizon cost $J_{\beta}(\bq,\pi)$ for any $\beta \in (0,1)$.
\end{theorem}

\begin{remark}\label{rem:AC-MDP}
It is well known that a stationary optimal policy for the discounted-cost MDP with discount factor $\beta$ tending to $1$ can be used to obtain a stationary optimal policy
for the corresponding average-cost MDP;
see, e.g., \cite[Chapter 5]{O-L-book}, \cite[Chapter 7]{sennott-book}.
Therefore, Algorithm~$1$ can also be used in a similar manner to obtain an average-cost optimal policy.
\end{remark}

%

\subsubsection{General Case.}
\label{sec:optimal:general}
In contrast to the case of unit costs and symmetric arrivals of the previous section, deriving an explicit switching curve for our optimal scheduling policy is difficult
in general as it represents the solution to a general multidimensional stochastic optimal control problem;
and, in particular, the structure of the switching curve for our optimal policy can depend on the arrival processes in addition to other aspects of the optimal control problem.
Hence, instead of an explicit optimal solution, we investigate a ``look ahead'' policy based on value iterations,
which we show to be asymptotically optimal with respect to the degree of look ahead.
We note that general background on and analysis of look-ahead policies can be found in textbooks on stochastic control; see, e.g., \cite{Bert12,Pute05}.
Throughout this section, let $\cV$ denote the set of bounded real-valued functions on the
state space $\Int_+^{|\IndSet|} \simeq \Int_+^4$ with supremum norm 
$\norm{V}\eqdef\sup \{ |V(\bq)|\,:\,\bq\in\Int_+^{4}\}$, $V\in \cV$.
We also define $V^*_\beta,\tilde{V}^*_\beta\in \cV$ by
\begin{equation*}
V^*_\beta(\bq)\eqdef\max\{ J_\beta(\bq,\pi)\,:\,\pi\in\cM \}, \qquad\qquad \tilde{V}^*_\beta(\bq)\eqdef\max\{ \tilde{J}_\beta(\bq,\pi)\,:\,\pi\in\cM \},
\end{equation*}
recalling $\cM$ to be the set of all stationary Markov policies.

%
Consider, as in Section~\ref{sec:alt_formulation}, value iteration on the optimization problem \eqref{prob:opt_tilde_J} starting with $V_0=0$,
which can be viewed as solving the optimization problem over the look-ahead horizon with future values ignored beyond the horizon.
More specifically, we define the $\ell$-th look-ahead policy $\pi_\ell$ to be the policy that exploits the $\ell$-th value function as an approximation of an optimal solution,
thus yielding
\begin{equation}
\pi_\ell(\bq)\eqdef\argmax\left\{ r(\bq,\bs)+\E\left[ V_\ell\left( (\bq-\bs)^++\bA \right) \right]\,:\,\bs\in\cP \right\}.
\label{look-ahead.policy}
\end{equation}

This class of look-ahead policies has several important benefits, two of which we briefly highlight based on our theoretical results.
\begin{enumerate}
\item[(i).]  Our optimal results for the interior and trivial boundary can be exploited to significantly reduce the computational burden of the look-ahead policy.
Note that policy $\pi_k$ is the same policy that generates the $(k+1)$-th value function.
Since the optimal actions on states in the interior and the trivial boundary are known, we only need to determine the optimal actions for states in the critical boundary.
\item[(ii).]  For sufficiently large $\ell$, we can establish that policy $\pi_\ell$ is a good approximation to an optimal solution of problem \eqref{prob:opt_J}. 
Since $\pi_\ell$ is based on value iterations to solve \eqref{prob:opt_tilde_J}, then $\tilde{J}_\beta(\bq,\pi_\ell)$ is an approximation to $\tilde{V}_\beta^*(\bq)$.
Furthermore, in the following theorem, we prove that $J_\beta(\bq,\pi_\ell)$ converges to $V_\beta^*(\bq)$ as
$\ell \rightarrow \infty$.
\end{enumerate}

\begin{theorem}\label{thm:convergence} 
Let $V_0=0$ and let $\pi_\ell$ be the look-ahead policy produced by value iteration for $\ell=1,2,\cdots$.
Then, $J_\beta(\,\cdot\,,\pi_\ell)$ converges to $V^*_\beta(\,\cdot\,)$ as $\ell\to\infty$.
More precisely, if the inequality
\begin{equation}
	\norm{V^{\ell+1}-V^\ell}<\frac{\varepsilon(1-\beta)^2}{2\beta^2} \label{eq:conv_assume}
\end{equation}
holds for some $\varepsilon>0$, then we can conclude that $\norm{J_\beta(\,\cdot\,,\pi)-V_\beta^*}<\varepsilon$.
\end{theorem}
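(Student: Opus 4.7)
The plan is to reduce the convergence statement to standard contraction-based MDP value-iteration error bounds on the transformed reward problem $(\tilde P_\beta)$, and then transfer the resulting error bound back to $(P_\beta)$ via the closed-form identity of Proposition~\ref{prop:alt_formulation}. Working with $\tilde J_\beta$ is essential because the per-period reward $r(\bq,\bs)$ is uniformly bounded by $\sum_{ij}c_{ij}$, so all iterates live in the sup-normed space $\cV$, whereas the original cost $c^\pi(t)$ grows linearly in queue length and therefore does not.

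First, I would set up the Bellman operator $T:\cV\to\cV$ by $(TV)(\bq) = \max_{\bs\in\cP}\{r(\bq,\bs) + \beta\,\E[V((\bq-\bs)^+ + \bA)]\}$ and observe in one line that $T$ is a $\beta$-contraction in $\norm{\cdot}$. The consequences are standard: $\tilde V^*_\beta$ is the unique fixed point of $T$, the iterates $V_k=T^k V_0$ converge to $\tilde V^*_\beta$ geometrically, and $\norm{V_{k+1}-V_k}\leq \beta^k\norm{V_1-V_0}\to 0$. For the greedy policy $\pi_k$, let $T_{\pi_k}$ denote its evaluation operator; by construction $T_{\pi_k}V_k = TV_k = V_{k+1}$, and $\tilde J_\beta(\,\cdot\,,\pi_k)$ is the unique fixed point of the $\beta$-contraction $T_{\pi_k}$. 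Two applications of the triangle inequality then yield
\begin{equation*}
\norm{\tilde J_\beta(\,\cdot\,,\pi_k) - \tilde V^*_\beta} \; \leq \; \norm{\tilde J_\beta(\,\cdot\,,\pi_k) - V_k} + \norm{V_k - \tilde V^*_\beta} \; \leq \; \frac{2}{1-\beta}\,\norm{V_{k+1}-V_k},
\end{equation*}
where each summand on the right is bounded by $\norm{V_{k+1}-V_k}/(1-\beta)$ through the standard telescoping contraction argument applied to $(V_k,\tilde J_\beta(\,\cdot\,,\pi_k))$ under $T_{\pi_k}$ and to $(V_k,\tilde V^*_\beta)$ under $T$, using the identifications $T_{\pi_k}V_k = TV_k = V_{k+1}$.

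Second, I would transfer this bound back to $J_\beta$. The identity from Proposition~\ref{prop:alt_formulation}, namely $(1-\beta)J_\beta(\bq,\pi) = c^\pi(0) + g - \beta \tilde J_\beta(\bq,\pi)$ with $c^\pi(0)$ and $g$ both policy-independent, when subtracted from its counterpart for an optimal policy gives
\begin{equation*}
J_\beta(\bq,\pi_k) - V^*_\beta(\bq) \; = \; \frac{\beta}{1-\beta}\,\bigl(\tilde V^*_\beta(\bq) - \tilde J_\beta(\bq,\pi_k)\bigr).
\end{equation*}
Combining this with the bound from the first step yields $\norm{J_\beta(\,\cdot\,,\pi_k) - V^*_\beta} \leq \frac{2\beta}{(1-\beta)^2}\,\norm{V_{k+1}-V_k}$; substituting the hypothesis \eqref{eq:conv_assume} then delivers the quantitative conclusion $\norm{J_\beta(\,\cdot\,,\pi_k) - V^*_\beta}<\varepsilon$. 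The unqualified convergence $J_\beta(\,\cdot\,,\pi_k) \to V^*_\beta$ as $k\to\infty$ follows because $\norm{V_{k+1}-V_k}\to 0$ geometrically.

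The main obstacle is bookkeeping rather than conceptual: since $J_\beta(\bq,\pi)$ is unbounded in $\bq$, one cannot apply sup-norm contraction arguments directly to it, and all Banach-style estimates must be carried out on $\tilde J_\beta$ within $\cV$. The closed-form identity of Proposition~\ref{prop:alt_formulation} neutralizes this issue by expressing $J_\beta(\,\cdot\,,\pi)-V^*_\beta$ as a $\bq$-independent multiple of $\tilde J_\beta(\,\cdot\,,\pi) - \tilde V^*_\beta$ (the $c^\pi(0)$ and $g$ terms cancelling), so the final sup-norm bound is meaningful on the unbounded state space. The extra factor $\beta/(1-\beta)$ produced by this transfer is exactly what accounts for the two powers of $(1-\beta)$ and the additional $\beta$ in the denominator of the hypothesis \eqref{eq:conv_assume}.
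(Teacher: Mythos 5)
Your approach is the same as the paper's: the paper simply cites Theorem 6.3.1 of Puterman for the bound $\norm{\tilde{J}_\beta(\,\cdot\,,\pi_k)-\tilde{V}^*_\beta}<\frac{1-\beta}{\beta}\varepsilon$ and then transfers it through the identity $\norm{J_\beta(\,\cdot\,,\pi_k)-V_\beta^*}=\frac{\beta}{1-\beta}\norm{\tilde{J}_\beta(\,\cdot\,,\pi_k)-\tilde{V}^*_\beta}$, exactly as you do; you merely unpack the contraction argument that the citation hides. The qualitative convergence claim and the transfer step are both fine.

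There is, however, a quantitative slip in your contraction estimate. By anchoring both triangle inequalities at $V_k$ you obtain $\norm{\tilde{J}_\beta(\,\cdot\,,\pi_k)-\tilde{V}^*_\beta}\leq \frac{2}{1-\beta}\norm{V_{k+1}-V_k}$, hence $\norm{J_\beta(\,\cdot\,,\pi_k)-V^*_\beta}\leq \frac{2\beta}{(1-\beta)^2}\norm{V_{k+1}-V_k}$, and substituting \eqref{eq:conv_assume} gives only $\varepsilon/\beta$, not $\varepsilon$. The constants in \eqref{eq:conv_assume} are tuned to the sharp Puterman bound $\frac{2\beta}{1-\beta}\norm{V_{k+1}-V_k}$, which you recover by anchoring at $V_{k+1}$ instead: from
\begin{equation*}
\norm{\tilde{J}_\beta(\,\cdot\,,\pi_k)-V_{k+1}} = \norm{T_{\pi_k}\tilde{J}_\beta(\,\cdot\,,\pi_k)-T_{\pi_k}V_k}
\leq \beta\bigl(\norm{\tilde{J}_\beta(\,\cdot\,,\pi_k)-V_{k+1}}+\norm{V_{k+1}-V_k}\bigr)
\end{equation*}
one gets $\norm{\tilde{J}_\beta(\,\cdot\,,\pi_k)-V_{k+1}}\leq\frac{\beta}{1-\beta}\norm{V_{k+1}-V_k}$, and likewise $\norm{V_{k+1}-\tilde{V}^*_\beta}\leq\frac{\beta}{1-\beta}\norm{V_{k+1}-V_k}$; summing and multiplying by $\frac{\beta}{1-\beta}$ then yields $\frac{2\beta^2}{(1-\beta)^2}\norm{V_{k+1}-V_k}<\varepsilon$ as required. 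This is a one-line repair, not a conceptual flaw, but as written your chain does not establish the stated inequality.
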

%

\section{Proofs of Main Results}
\label{sec:proofs}
%

In this section we turn to the proofs of
our main results from the previous section, with some additional proofs of technical results provided in the appendix.
We start with the trivial boundary, because some of these results are used for other regions, and then consider the interior region and critical boundary.

\subsection{Trivial Boundary}
\subsubsection{Proof of Proposition~\ref{prop:trivial_boundary}.} We show that the value function $V_n$ satisfies
\begin{equation*}
    \beta V_n(\bq+\bfe_{\brho})\leq \beta V_n(\bq)+c_{\brho}, \tag{\ref{eq:prop1:result} Revisited}
\end{equation*}
for all $\bq \in\Int_+^{4}$ and $\brho\in\IndSet$ by induction on $n$.
First, since $V_{0}(\bq)=0$ for any $\bq\in\Int_+^{4}$, \eqref{eq:prop1:result} holds for $n=0$.
Next, suppose that $V_k$ satisfies \eqref{eq:prop1:result}, let $\bs\in \cP$ be a schedule, and consider two cases.
\begin{enumerate}
\item[(i).] If ($s_{\brho}=0$) or ($s_{\brho}=1$ and $q_{\brho} \geq 1$), we have 
\begin{equation*}
r(\bq+\bfe_{\brho},\bs)=r(\bq,\bs), \quad \beta V_k((\bq+\bfe_{\brho}-\bs)^++\bA)=\beta V_k((\bq-\bs)^++\bA+\bfe_{\brho}) \leq \beta V_k((\bq-\bs)^++\bA)+c_{\brho},
\end{equation*}
where the inequality follows from the induction hypothesis.
\item[(ii).] Otherwise (i.e., $s_{\brho}=1$ and $q_{\brho}=0$), we obtain
\begin{equation*}
r(\bq+\bfe_{\brho},\bs)=r(\bq,\bs)+c_{\brho}, \qquad \beta V_k((\bq+\bfe_{\brho}-\bs)^++\bA)=\beta V_k((\bq-\bs)^++\bA).
\end{equation*}
\end{enumerate}

From (i) and (ii), we derive
\begin{align*}
\beta V_{k+1}(\bq+\bfe_{\brho}) =&\beta \max_{\bs\in\cP}\left\{ r(\bq+\bfe_{\brho},\bs)+\beta\E[V_k((\bq+\bfe_{\brho}-\bs)^++\bA)] \right\}\\
\leq& \beta \max_{\bs\in\cP}\left\{ r(\bq,\bs)+\beta\E[V_k((\bq-\bs)^++\bA)] \right\}+\beta c_{\brho}\\
=& \beta V_{k+1}(\bq)+\beta c_{\brho} \leq \beta V_{k+1}(\bq)+c_{\brho},
\end{align*}
which implies that $V_{k+1}$ satisfies \eqref{eq:prop1:result} and, by induction, the proof of Proposition~\ref{prop:trivial_boundary} is complete.

\subsubsection{Proof of Theorem~\ref{thm:trivial_boundary}.}
\label{proof:trivial}
Suppose that \eqref{eq:prop1:result} holds for $V_n$.
Let $\bq$ be a state in the trivial boundary and $\bs$ the schedule that satisfies \eqref{eq:trivial_boundary}.
Then, for any schedule $\bs^\prime\in\cP$, we have
\begin{equation*}
(\bq-\bs^\prime)^+=(\bq-\bs)^++\sum_{\brho\in\IndSet^\prime}\bfe_{\brho}, \qquad\qquad
    r(\bq,\bs^\prime)=r(\bq,\bs)-\sum_{\brho\in\IndSet^\prime} c_{\brho},
\end{equation*}
where $\IndSet^{\prime}=\{\brho\in\IndSet\,|\,q_{\brho}\geq 1\mbox{ and } s^{\prime}_{\brho}=0\}$.
Hence, we obtain
\begin{align*}
r(\bq,\bs^\prime)+\beta\E[V_n( (\bq-\bs^\prime)^++\bA)]
    =& r(\bq,\bs^\prime)+\beta\E\left[V_n\left( (\bq-\bs)^++\bA +\sum\nolimits_{\brho\in\IndSet^\prime}\bfe_{\brho} \right)\right]\\
    \leq& r(\bq,\bs^\prime)+\beta\E[V_n( (\bq-\bs)^++\bA )]+\sum\nolimits_{\brho\in\IndSet^\prime} c_{\brho} \\ 
    =& r(\bq,\bs)+\beta\E[V_n( (\bq-\bs)^++\bA )],
\end{align*}
where the inequality follows from Proposition~\ref{prop:trivial_boundary}.
As a result, $\bs$ is the optimal schedule for $\bq$ in any value iteration.

\subsection{Interior Region}
\label{proof:interior}
\subsubsection{Proof of Proposition~\ref{prop:interior}.}
Let $\bq\in\Int_+^{4}$ be an interior point and $\bs\in\cP$ a schedule such that $r(\bq,\bs)=r_{\max}$. If $\bs^\prime\leq \bs$, then \eqref{eq:interior} holding for $V_{k+1}$ immediately follows from Proposition~\ref{prop:trivial_boundary}.
We therefore
focus on the other case
which, in a $2 \times 2$ switch, 
means that $\bs^\prime$ and $\bs$ have no common queue.
Now we use
induction on $n$, and show that any value function satisfies
\begin{align*}
    r(\bq,\bs)+\beta V_n(\bq-\bs)\geq r(\bq,\bs^\prime)+\beta V_n( \bq-\bs^\prime ),\tag{\ref{eq:interior} Revisited}
\end{align*} 
for any schedule $\bs^\prime\in\cP$ with $\bs^\prime\leq \bq$. First, for $n=0$, \eqref{eq:interior} holds because $V_{0}(\bq)=0$ and $r(\bq,\bs)=r_{\max}\geq r(\bq,\bs^\prime)$ for any $\bs^\prime\in\cP$.
Next, assume that $V_k$ satisfies \eqref{eq:interior}.
%


Since $\bs^\prime\leq \bq$ and since $\bs^\prime$ and $\bs$ have no common queue, we have that  $\bq-\bs^\prime$ is an interior point with $r(\bq-\bs^\prime,\bs)=r_{\max}$. 
Hence, we obtain from the induction hypothesis that
\begin{equation*}
V_{k+1}(\bq-\bs^\prime)=r(\bq-\bs^\prime,\bs)+\beta\E[V_k(\bq-\bs-\bs^\prime+\bA)] =r(\bq,\bs)+\beta\E[V_k(\bq-\bs-\bs^\prime+\bA)].
\end{equation*}
Then, from the definition of the value iteration, we have
\begin{equation*}
V_{k+1}(\bq-\bs)\geq r(\bq-\bs,\bs^\prime)+\beta\E[V_k(\bq-\bs-\bs^\prime+\bA)] =r(\bq,\bs^\prime)+\beta\E[V_k(\bq-\bs-\bs^\prime+\bA)],
\end{equation*}
so that
\begin{align*}
	V_{k+1}(\bq-\bs^\prime)-r(\bq,\bs)\leq V_{k+1}(\bq-\bs)-r(\bq,\bs^\prime),
\end{align*}
which implies that \eqref{eq:interior} holds for $n=k+1$, since $\beta<1$. Thus, the proof of Proposition~\ref{prop:interior} is complete by induction.


\subsubsection{Proof of Theorem~\ref{thm:interior}.}
%
For any interior point $\bq$ with a schedule $\bs\in\cP$ such that $r(\bq,\bs)=r_{\max}$ and any $\bs^\prime\leq\bq$, we have
\begin{align*}
r(\bq,\bs)+\beta\E[V_n(\bq-\bs+\bA)] & = \E[ r(\bq+\bA,\bs)+\beta V_n(\bq+\bA-\bs)]\\
& \geq\E[ r(\bq+\bA,\bs^\prime)+\beta V_n(\bq+\bA-\bs^\prime)] =r(\bq,\bs^\prime)+\beta\E[V_n(\bq+\bA-\bs^\prime)],
\end{align*}
where the first and the last equalities follow from $\bq+\bA\geq\bs,\bs^\prime$ (which implies $r(\bq,\bs)=r(\bq+\bA,\bs)$), and the inequality follows from \eqref{eq:interior} for $V_n$.
Hence, Theorem~\ref{thm:interior} holds in any value iteration.

\subsection{Critical Boundary: When \boldmath{\normalsize $c\mu$} is Optimal.}

\subsubsection{Proof of Proposition~\ref{prop:three_queue}.}

\label{proof:critical-cmu}
%
Under the assumptions of the proposition, we prove by induction that, for any value function $V_n$,
\begin{equation*}
c_{\brho}+c_{\bomega}+\beta V_n(\bq+\bfe_{\bmu}) \geq c_{\bmu}+\beta V_n(\bq+\bfe_{\brho}+\bfe_{\bomega}),\tag{\ref{eq:three_queue} Revisited}
\end{equation*}
for any $\bq\in\Int_+^{4}$.
First, for $n=0$, $V_{0}$ satisfies \eqref{eq:three_queue} because $V_{0}(\bq)=0$ for any $\bq\in\Int_+^{4}$ and $c_{\bmu}\leq c_{\brho}+c_{\bomega}$.
Next, assume that \eqref{eq:three_queue} holds for $V_k$ and consider two cases.
\begin{enumerate}
\item[(i)] Suppose that 
\begin{equation*}
V_{k+1}(\bq+\bfe_{\brho}+\bfe_{\bomega})=c_{\bmu} \cdot \indic_{\{q_{\bmu}>0\}}+c_{\bnu} \cdot \indic_{\{q_{\bnu}>0\}}
	+\beta\E[V_k((\bq-\bfe_{\bmu}-\bfe_{\bnu})^++\bA+\bfe_{\brho}+\bfe_{\bomega})].
\end{equation*}
If $q_{\bmu}\geq 1$, we have
\begin{align*}
V_{k+1}(\bq+\bfe_{\brho}+\bfe_{\bomega})&=c_{\bmu}+c_{\bnu} \cdot \indic_{\{q_{\bnu}>0\}} +\beta\E[V_k((\bq-\bfe_{\bnu})^++\bA+\bfe_{\brho}+\bfe_{\bomega}-\bfe_{\bmu})]\\
&\leq c_{\brho}+c_{\bomega}+c_{\bnu} \cdot \indic_{\{q_{\bnu}>0\}}+\beta\E[V_k((\bq-\bfe_{\bnu})^++\bA)]
\leq c_{\brho}+c_{\bomega}+V_{k+1}(\bq+\bfe_{\bmu})-c_{\bmu},
\end{align*}
where the first inequality follows from the induction hypothesis and the second inequality follows from the definition of the value iteration.
On the other hand,
if $q_{\bmu}=0$, we obtain
\begin{align*}
V_{k+1}(\bq+\bfe_{\brho}+\bfe_{\bomega}) =&c_{\bnu} \cdot \indic_{\{q_{\bnu}>0\}}+\beta\E[V_k((\bq-\bfe_{\bnu})^++\bA+\bfe_{\brho}+\bfe_{\bomega})]\\
\leq& c_{\bnu} \cdot \indic_{\{q_{\bnu}>0\}}+c_{\brho}+c_{\bomega}+\beta\E[V_k((\bq-\bfe_{\bnu})^++\bA)]
\leq c_{\brho}+c_{\bomega}+V_{k+1}(\bq+\bfe_{\bmu})-c_{\bmu},
\end{align*}
where the first inequality follows from Proposition~\ref{prop:trivial_boundary} and the second inequality follows from the definition of the value iteration. This leads to 
\begin{align*}
c_{\brho}+c_{\bomega}+\beta V_n(\bq+\bfe_{\bmu}) \geq c_{\bmu}+\beta V_n(\bq+\bfe_{\brho}+\bfe_{\bomega}), 
\end{align*}
since $\beta<1$.
\item[(ii)] Otherwise, suppose that 
\begin{equation*}
V_{k+1}(\bq+\bfe_{\brho}+\bfe_{\bomega})=c_{\brho}+c_{\bomega}+\beta\E[V_k(\bq+\bA)].
\end{equation*}
If $q_{\bnu}\geq 1$, we have
\begin{align*}
V_{k+1}(\bq+\bfe_{\brho}+\bfe_{\bomega}) =&c_{\brho}+c_{\bomega}+\beta\E[V_k(\bq+\bA)]\\
\leq&c_{\brho}+c_{\bomega}+c_{\bnu}+\beta\E[V_k(\bq+\bA-\bfe_{\bnu})] \leq c_{\brho}+c_{\bomega}+V_{k+1}(\bq+\bfe_{\bmu})-c_{\bmu},
\end{align*}
where the first inequality follows from Proposition~\ref{prop:trivial_boundary} and the second inequality follows from the definition of the value iteration. 
However, if $q_{\bnu}=0$, we obtain
\begin{align*}
V_{k+1}(\bq+\bfe_{\brho}+\bfe_{\bomega}) & =c_{\brho}+c_{\bomega}+\beta\E[V_k(\bq+\bA)] \\
	& \leq c_{\brho}+c_{\bomega}+V_{k+1}(\bq+\bfe_{\bmu})-c_{\bmu},
\end{align*}
where the inequality follows from the definition of the value iteration. 
\end{enumerate}

Hence, \eqref{eq:three_queue} holds for $V_{k+1}$ since $\beta<1$ and, by induction, the proof of Proposition~\ref{prop:three_queue} is complete.

\subsubsection{Proof of Theorem~\ref{thm:three_queue}.}
Under the assumptions of the theorem, recall $\bq\in\Int_+^{4}$ to be a state such that $q_{\bnu}=0$ and all other queues are nonempty.
Then, possible schedules at state $\bq$ are $\bfe_{\bmu}$ and $\bfe_{\brho}+\bfe_{\bomega}$.
From Proposition~\ref{prop:three_queue}, we have
\begin{align*}
r(\bq,\bfe_{\bmu})+\beta\E[V_n(\bq-\bfe_{\bmu}+\bA)] & = \E\left[ c_{\bmu}+\beta V_n\left((\bq+\bA-\bfe_{\bmu}-\bfe_{\brho}-\bfe_{\bomega})+\bfe_{\brho}+\bfe_{\bomega}\right)\right]\\
& \leq\E\left[ c_{\brho}+c_{\bomega}+\beta V_n\left((\bq+\bA-\bfe_{\bmu}-\bfe_{\brho}-\bfe_{\bomega})+\bfe_{\bmu}\right)\right]\\
& =r(\bq,\bfe_{\brho}+\bfe_{\bomega})+\beta\E[V_n(\bq+\bA-(\bfe_{\brho}+\bfe_{\bomega}))],
\end{align*}
which implies Theorem~\ref{thm:three_queue} holds.

\subsection{Critical Boundary: When Switching Curve is Optimal.}
\label{proof:critical-switchcurve}
In this subsection, we prove Theorem~\ref{thm:switching_curve} in two steps.
First, in Section~\ref{subsec:switchingcurve}, we present a lemma establishing that the value function satisfies the conclusion of Theorem~\ref{thm:switching_curve} under the conditions \eqref{eq:two_queues}, \eqref{eq:three_queues_other} and \eqref{eq:three_queues_same} of Proposition~\ref{PROP:SWITCHING_CURVE}.
Then, in Section \ref{subsec:propswitchingcurve}, we prove Proposition~\ref{PROP:SWITCHING_CURVE} and show that the value function indeed satisfies
\eqref{eq:two_queues}, \eqref{eq:three_queues_other} and \eqref{eq:three_queues_same}, 
thus immediately yielding Theorem~\ref{thm:switching_curve}.

\subsubsection{Proof of Theorem~\ref{thm:switching_curve}.\label{subsec:switchingcurve}}
Our first step in the proof of Theorem~\ref{thm:switching_curve} is to establish the following lemma.

\begin{lemma}\label{lem:IH_switching_curve}
Let $V_n$ be the value function from the $n^{\textrm{th}}$ step of value iteration. 
Suppose that $V_n$ satisfies
\begin{align*}
V_n(\bq+\bfe_{\bmu}+\bfe_{\brho})+V_n(\bq+\bfe_{\bmu}) & \geq V_n(\bq+2\bfe_{\bmu})+V_n(\bq+\bfe_{\brho}), \tag{\ref{eq:two_queues} Revisited}\\
V_n(\bq+\bfe_{\bmu}+\bfe_{\brho})+V_n(\bq+\bfe_{\bmu}+\bfe_{\bnu}) & \geq V_n(\bq+2\bfe_{\bmu}+\bfe_{\bnu})+V_n(\bq+\bfe_{\brho}), \tag{\ref{eq:three_queues_other} Revisited}\\
V_n(\bq+\bfe_{\bmu}+\bfe_{\brho}+\bfe_{\bomega})+V_n(\bq+\bfe_{\bmu}) & \geq V_n(\bq+2\bfe_{\bmu})+V_n(\bq+\bfe_{\brho}+\bfe_{\bomega}). \tag{\ref{eq:three_queues_same} Revisited}
\end{align*}
Consider the optimization problem on the right-hand side of the Bellman equation \eqref{eq:BellmanEq} involving $V_n(\cdot)$. If an optimal action on $\bq$  
is to serve queues $\bmu$ and $\bnu$ simultaneously,
then this is also an optimal action on $\bq+\bfe_{\bmu}$ and $\bq+\bfe_{\bnu}$.
 Therefore, an optimal action on $\bq^\prime$ is to serve queues $\bmu$ and $\bnu$ if $q^\prime_{\bmu} \geq q_{\bmu}$,
$q^\prime_{\bnu}\geq q_{\bnu}$, and $q^\prime_{\brho}= q_{\brho}$ for all $\brho\in\IndSet$ such that $\brho\#\bmu$.
\end{lemma}

\proof{Proof}
Note that if $\bq$ is in the interior or trivial boundary, the conclusion of
Theorem~\ref{thm:switching_curve} holds from Theorems~\ref{thm:interior} and \ref{thm:trivial_boundary}. Hence, we assume that state $\bq$ is in the critical boundary, and from the hypothesis of the lemma, we know that serving queues $\bmu$ and $\bnu$ is an optimal action on $\bq$ in the $n$-th value iteration.  

By symmetry, $\bq$ falls into one of the three subregions:
\begin{enumerate}
\item[{\bf C1}:] $q_{\bmu}\geq 1$, $q_{\bnu}=0$, $q_{\brho}\geq 1$, and $q_{\bomega}= 0$;
\item[{\bf C2}:] $q_{\bmu}\geq 1$, $q_{\bnu}\geq 1$, $q_{\brho}\geq 1$, and $q_{\bomega}=0$;
\item[{\bf C3}:] $q_{\bmu}\geq 1$, $q_{\bnu}=0$, $q_{\brho}\geq 1$, and $q_{\bomega}\geq 1$;
\end{enumerate}
where $\brho\neq\bomega$ are queues that cannot be served with $\bmu$. We now prove the lemma for each of these three cases.
\proof{Proof for {\bf C1}.}
Since serving queues $\bmu$ and $\bnu$ is the optimal action on $\bq$ in the $n$-th value iteration,
we have
\begin{equation}
	c_{\bmu}+\beta\E[V_n(\bq+\bA-\bfe_{\bmu})]
	\geq c_{\brho}+\beta\E[V_n(\bq+\bA-\bfe_{\brho})] , \label{eq:C1_assump}
\end{equation}
and substituting $\bq+\bA-\bfe_{\bmu}-\bfe_{\brho}\geq\bzero$ for $\bq$ in \eqref{eq:two_queues} yields
\begin{align*}
V_n(\bq+\bA)+V_n(\bq+\bA-\bfe_{\brho}) \geq V_n(\bq+\bA+\bfe_{\bmu}-\bfe_{\brho})+V_n(\bq+\bA-\bfe_{\bmu}).
\end{align*}
Taking expectation of the above equation for $\bA$ and adding this to \eqref{eq:C1_assump}, we obtain 
\begin{equation*}
	c_{\bmu}+\beta\E[V_n(\bq+\bA)]
	\geq c_{\brho}+\beta\E[V_n(\bq+\bA+\bfe_{\bmu}-\bfe_{\brho})],
\end{equation*}
which implies that the optimal action on $\bq+\bfe_{\bmu}$ is to serve queues $\bmu$ and $\bnu$.

On the other hand, for $\bq+\bfe_{\bnu}$, we have
\begin{equation*}
c_{\brho}+\beta\E[V_n(\bq+\bA+\bfe_{\bnu}-\bfe_{\brho})] \leq c_{\brho}+c_{\bnu}+\beta\E[V_n(\bq+\bA-\bfe_{\brho})] \leq c_{\bmu}+c_{\bnu}+\beta\E[V_n(\bq+\bA-\bfe_{\bmu})],
\end{equation*}
where the first inequality follows from Proposition~\ref{prop:trivial_boundary} and the second inequality follows from \eqref{eq:C1_assump}.
Hence, the optimal action on $\bq+\bfe_{\bnu}$ is to serve queues $\bmu$ and $\bnu$.
\Halmos
\endproof

\proof{Proof for {\bf C2}.}
Since serving queues $\bmu$ and $\bnu$ is the optimal action on $\bq$ in the $n$-th value iteration, we have from the definition of {\bf C2} that 
\begin{align}\label{eq:C2_assump}
\begin{array}{ll}
c_{\bmu}+c_{\bnu}+\beta\E[V_n(\bq+\bA-\bfe_{\bmu}-\bfe_{\bnu})] \geq c_{\brho}+\beta\E[V_n(\bq+\bA-\bfe_{\brho})].
\end{array}
\end{align}
Substituting $\bq+\bA-\bfe_{\bmu}-\bfe_{\bnu}-\bfe_{\brho}\geq\bzero$ for $\bq$ in \eqref{eq:three_queues_other} yields
\begin{align*}
	V_n(\bq+\bA-\bfe_{\bnu})+V_n(\bq+\bA-\bfe_{\brho}) \geq V_n(\bq+\bA+\bfe_{\bmu}-\bfe_{\brho})+V_n(\bq+\bA-\bfe_{\bmu}-\bfe_{\bnu}).
\end{align*}
Taking expectation of the above equation for $\bA$ and adding this to \eqref{eq:C2_assump}, we obtain 
\begin{equation*}
c_{\bmu}+c_{\bnu}+\beta\E[V_n(\bq+\bA-\bfe_{\bnu})] \geq c_{\brho}+\beta\E[V_n(\bq+\bA+\bfe_{\bmu}-\bfe_{\brho})],
\end{equation*}
which implies that the optimal action on $\bq+\bfe_{\bmu}$ is to serve packets in queues $\bmu$ and $\bnu$
and, by symmetry, this is also the optimal action on $\bq+\bfe_{\bnu}$. 
\Halmos
\endproof
\endproof

\proof{Proof for {\bf C3}.}
Since serving queues $\bmu$ and $\bnu$ is the optimal action on $\bq$ in the $n$-th value iteration,
we have
\begin{align}\label{eq:C3_assump}
\begin{array}{ll}
c_{\bmu}+\beta\E[V_n(\bq+\bA-\bfe_{\bmu})] 
	& \geq c_{\brho}+c_{\bomega}+\beta\E[V_n(\bq+\bA-\bfe_{\brho}-\bfe_{\bomega})] ,
\end{array}
\end{align}
and substituting $\bq+\bA-\bfe_{\bmu}-\bfe_{\brho}-\bfe_{\bomega}\geq\bzero$ for $\bq$ in \eqref{eq:three_queues_same} yields
\begin{align*}
V_n(\bq+\bA)+V_n(\bq+\bA-\bfe_{\brho}-\bfe_{\bomega})
		& \geq V_n(\bq+\bA+\bfe_{\bmu}-\bfe_{\brho}-\bfe_{\bomega})+V_n(\bq+\bA-\bfe_{\bmu}).
\end{align*}
Taking expectation of the above equation for $\bA$ and adding this to \eqref{eq:C3_assump}, we obtain 
\begin{equation*}
c_{\bmu}+\beta\E[V_n(\bq+\bA)] \geq c_{\brho}+c_{\bomega}+\beta\E[V_n(\bq+\bA+\bfe_{\bmu}-\bfe_{\brho}-\bfe_{\bomega})],
\end{equation*}
which implies that the optimal action on $\bq+\bfe_{\bmu}$ is to serve queues $\bmu$ and $\bnu$.

On the other hand, for $\bq+\bfe_{\bnu}$, we have
\begin{align*}
c_{\brho}+c_{\bomega}+\beta\E[V_n(\bq+\bA+\bfe_{\bnu}-\bfe_{\brho}-\bfe_{\bomega})]
	&\leq c_{\brho}+c_{\bnu}+c_{\bomega}+\beta\E[V_n(\bq+\bA-\bfe_{\brho}-\bfe_{\bomega})]\\
	&\leq c_{\bnu}+c_{\bmu}+\beta\E[V_n(\bq+\bA-\bfe_{\bmu})],
\end{align*}
where the first inequality follows from Proposition~\ref{prop:trivial_boundary} and the second inequality follows from \eqref{eq:C3_assump}.
Hence, the optimal action on $\bq+\bfe_{\bnu}$ is to serve packets in queues $\bmu$ and $\bnu$.
\Halmos
\endproof

Once Proposition~\ref{PROP:SWITCHING_CURVE} is established, Theorem~\ref{thm:switching_curve} follows from 
Lemma~\ref{lem:IH_switching_curve} above. 

\subsubsection{Proof of Proposition~\ref{PROP:SWITCHING_CURVE}. \label{subsec:propswitchingcurve}}
We first present a key lemma on inequality properties of the value function, followed by a proposition that subsumes Proposition~\ref{PROP:SWITCHING_CURVE}.

\begin{lemma}\label{LEMMA:AUX_INEQUALITY_ALT}
Suppose that $\bx,\by\in\{0,1\}^4$ and $\bz,\bw\in \Ints^{4}$ satisfy:
\emph{(a)} $\bx\leq \bfe_{\bmu}+\bfe_{\bnu}$ and $\by\leq\bfe_{\brho}+\bfe_{\bomega}$, component-wise;
and
\emph{(b)} $\bx+\by=\bz+\bw$.
Then, any value function $V_n$ satisfies
\begin{align}
	V_n(\bq+\bx)+V_n(\bq+\by)&\geq V_n(\bq+\bz)+V_n(\bq+\bw),\label{eq:aux_inequality}
\end{align}
for any $\bq\in\Int_+^{4}$.
\end{lemma}
This lemma, whose proof is provided in
Appendix~\ref{sec:pf:aux_ineqaulity_alt},
supports the proof of the following proposition, which trivially implies Proposition~\ref{PROP:SWITCHING_CURVE} and thus also completes the proof of Theorem~\ref{thm:switching_curve}.

\begin{proposition}\label{Prop:7}
For every $n\in \Int_+$, the $n$-th value function $V_n(\cdot)$ satisfies \eqref{eq:two_queues}~--~\eqref{eq:three_queues_same} and the following inequalities, for any $\bq\in\Int_+^{4}$:
\begin{align}
2 V_n(\bq+\bfe_{\bmu}) & \geq V_n(\bq)+V_n(\bq+2\bfe_{\bmu}), \label{eq:for_two}\\
V_n(\bq+\bfe_{\bmu}+\bfe_{\bnu})+V_n(\bq+\bfe_{\bmu}) &\geq  V_n(\bq+2\bfe_{\bmu}+\bfe_{\bnu})+V_n(\bq), \label{eq:for_three_different}
\end{align}
where $\bmu,\bnu,\brho,\bomega\in\IndSet$ such that $\brho\#\bmu$, $\bomega\#\bmu$, $\brho\neq\bomega$, $\bmu\neq\bnu$.
\end{proposition}
%


\proof{Proof}
We now prove the proposition by induction.
First, for $n=0$, all equations \eqref{eq:two_queues}~--~\eqref{eq:three_queues_same} and \eqref{eq:for_two}~--~\eqref{eq:for_three_different} hold
because $V_{0}(\bq)=0$ for all $\bq\in\Int_+^{4}$.
Next, assuming that the $k$-th value function satisfies all of these equations, we prove below that $V_{k+1}$ satisfies the
first three equations \eqref{eq:two_queues}~--~\eqref{eq:three_queues_same} and prove
in Appendices~\ref{sub:proof_of_eq:for_two_for_} and~\ref{app:for_three_different}
that $V_{k+1}$ satisfies the remaining equations.

%
\textbf{Proof of \eqref{eq:two_queues} for $V_{k+1}$.}
We prove \eqref{eq:two_queues} of Proposition~\ref{PROP:SWITCHING_CURVE} for $n=k+1$.
The right-hand side of this equation involves $V_{k+1}(\bq+2\bfe_{\bmu})$ and  $V_{k+1}(\bq+\bfe_{\brho})$. By the definition of $V_{k+1}(\cdot)$ in \eqref{eq:BellmanEq}, we need to work with the maximization problem on the right-hand side of \eqref{eq:BellmanEq} for $V_k(\cdot)$ at  $\bq+2\bfe_{\bmu}$ and at $\bq+\bfe_{\brho}$. We consider four cases based on the optimal schedules at $\bq+2\bfe_{\bmu}$ and at $\bq+\bfe_{\brho}$:
(1) Both optimal actions are to serve queues $\bmu$ and $\bnu$;
(2) Both optimal actions are to serve queues $\brho$ and $\bomega$;
(3) The optimal action on $(\bq+2\bfe_{\bmu})$ is to serve queues $\bmu$ and $\bnu$, and the optimal action on $\bq+\bfe_{\brho}$ is to serve queues $\brho$ and $\bomega$;
(4) The optimal action on $(\bq+2\bfe_{\bmu})$ is to serve queues $\brho$ and $\bomega$, and the optimal action on $\bq+\bfe_{\brho}$ is to serve queues $\bmu$ and $\bnu$.
We now prove \eqref{eq:two_queues} for the $(k+1)$-th value function dealing with all four cases.

First, suppose that optimal actions on $\bq+2\bfe_{\bmu}$ and $\bq+\bfe_{\brho}$ are to transmit packets in queues $\bmu$ and $\bnu$ in the $(k+1)$-th value iteration.
If $q_{\bmu}\geq 1$, we obtain
\begin{align*}
V_{k+1}(\bq+2\bfe_{\bmu})+V_{k+1}(\bq+\bfe_{\brho}) &\\
& \hspace*{-2.0in} = c_{\bmu}+c_{\bnu}\cdot \indic_{\{q_{\bnu}>0\}}+\beta\E[ V_k((\bq-\bfe_{\bnu})^++\bA+\bfe_{\bmu})]
+ c_{\bmu}+c_{\bnu}\cdot \indic_{\{q_{\bnu}>0\}}+\beta\E[ V_k((\bq-\bfe_{\bnu})^++\bA-\bfe_{\bmu}+\bfe_{\brho})]\\
& \hspace*{-2.0in} \leq c_{\bmu}+c_{\bnu}\cdot \indic_{\{q_{\bnu}>0\}}+\beta\E[ V_k( (\bq-\bfe_{\bnu})^++\bA+\bfe_{\brho} ) ]
+ c_{\bmu}+c_{\bnu}\cdot \indic_{\{q_{\bnu}>0\}}+\beta\E[ V_k((\bq-\bfe_{\bnu})^++\bA)]\\
& \hspace*{-2.0in} \leq V_{k+1}(\bq+\bfe_{\bmu}+\bfe_{\brho})+V_{k+1}(\bq+\bfe_{\bmu}) ;
\end{align*}
here the first inequality follows from the induction hypothesis (substituting $(\bq-\bfe_{\bnu})^++\bA-\bfe_{\bmu}$ for $\bq$ in \eqref{eq:two_queues} for $V_k$)
and the second inequality follows from the definition of the value iteration.
On the other hand, if $q_{\bmu}=0$, we have
\begin{align*}
V_{k+1}(\bq+2\bfe_{\bmu})+V_{k+1}(\bq+\bfe_{\brho}) &\\
& \hspace*{-1.5in} =c_{\bmu}+c_{\bnu}\cdot \indic_{\{q_{\bnu}>0\}}+\beta\E[ V_k((\bq-\bfe_{\bnu})^++\bA+\bfe_{\bmu})]
+c_{\bnu}\cdot \indic_{\{q_{\bnu}>0\}}+\beta\E[ V_k((\bq-\bfe_{\bnu})^++\bA+\bfe_{\brho})]\\
& \hspace*{-1.5in} \leq c_{\bmu}+c_{\bnu}\cdot \indic_{\{q_{\nu}>0\}}+\beta\E[  V_k((\bq-\bfe_{\bnu})^++\bA)]
+ c_{\bmu}+c_{\bnu}\cdot \indic_{\{q_{\nu}>0\}}+\beta\E[ V_k((\bq-\bfe_{\bnu})^++\bA+\bfe_{\brho})]\\
& \hspace*{-1.5in} \leq V_{k+1}(\bq+\bfe_{\bmu}+\bfe_{\brho})+V_{k+1}(\bq+\bfe_{\bmu}),
\end{align*}
where the first inequality follows from Proposition~\ref{prop:trivial_boundary} and the second inequality follows from the definition of the value iteration.

Second, assume that optimal actions on $\bq+2\bfe_{\bmu}$ and $\bq+\bfe_{\brho}$ are to transmit packets in queues $\brho$ and $\bomega$ in the $(k+1)$-th value iteration.
If $q_{\brho}\geq 1$, we obtain
\begin{align*}
V_{k+1}(\bq+2\bfe_{\bmu})+V_{k+1}(\bq+\bfe_{\brho}) &\\
& \hspace*{-2.0in} =c_{\brho}+c_{\bomega}\cdot \indic_{\{q_{\bomega}>0\}}+\beta\E[ V_k((\bq-\bfe_{\bomega})^++\bA-\bfe_{\brho}+2\bfe_{\bmu})]
+ c_{\brho}+c_{\bomega}\cdot \indic_{\{q_{\bomega}>0\}}+\beta\E[ V_k( (\bq-\bfe_{\bomega})^++\bA )]\\
& \hspace*{-2.0in} \leq c_{\brho}+c_{\bomega}\cdot \indic_{\{q_{21}>0\}}+\beta\E[ V_k((\bq-\bfe_{\bomega})^++\bA+\bfe_{\bmu})]
+ c_{\brho}+c_{\bomega}\cdot \indic_{\{q_{21}>0\}}+\beta\E[ V_k((\bq-\bfe_{\bomega})^++\bA-\bfe_{\brho}+\bfe_{\bmu})]\\
& \hspace*{-2.0in} \leq V_{k+1}(\bq+\bfe_{\bmu}+\bfe_{\brho})+V_{k+1}(\bq+\bfe_{\bmu}) ;
\end{align*}
here the first inequality follows from the induction hypothesis (substituting $(\bq-\bfe_{\bomega})^++\bA-\bfe_{\brho}$ for $\bq$ in \eqref{eq:two_queues}
for $V_k$) and the second inequality follows from the definition of the value iteration.
On the other hand, if $q_{\brho}=0$, we have
\begin{align*}
V_{k+1}(\bq+2\bfe_{\bmu})+V_{k+1}(\bq+\bfe_{\brho}) &\\
& \hspace*{-1.5in} =c_{\bomega}\cdot \indic_{\{q_{\omega}>0\}}+\beta\E[ V_k((\bq-\bfe_{\bomega})^++\bA+2\bfe_{\bmu})]
+c_{\brho}+c_{\bomega}\cdot \indic_{\{q_{\omega}>0\}}+\beta\E[ V_k(\bq-\bfe_{\bomega})^++\bA)]\\
& \hspace*{-1.5in} \leq c_{\brho}+c_{\bomega}\cdot \indic_{\{q_{\omega}>0\}}+\beta\E[ V_k( (\bq-\bfe_{\bomega})^++\bA+\bfe_{\bmu} )]
+ c_{\bomega}\cdot \indic_{\{q_{\omega}>0\}}+\beta\E[ V_k((\bq-\bfe_{\bomega})^++\bA+\bfe_{\bmu})]\\
& \hspace*{-1.5in} \leq V_{k+1}(\bq+\bfe_{\bmu}+\bfe_{\brho})+V_{k+1}(\bq+\bfe_{\bmu}),
\end{align*}
where the first inequality follows from the induction hypothesis (substituting $(\bq-\bfe_{\bomega})^++\bA$ for $\bq$ in \eqref{eq:for_two} for $V_k$)
and the second inequality follows from the definition of the value iteration.

Third, suppose that the optimal action on $\bq+2\bfe_{\bmu}$ and $\bq+\bfe_{\brho}$ is to serve packets in queues $\bmu$ and $\bnu$
and the optimal action on $\bq+\bfe_{\brho}$ is to transmit packets in queues $\brho$ and $\bomega$ in the $(k+1)$-th value iteration.
Then, we obtain
\begin{align*}
V_{k+1}(\bq+2\bfe_{\bmu})+V_{k+1}(\bq+\bfe_{\brho}) =& c_{\bmu}+c_{\bnu}\cdot \indic_{\{q_{\bnu}>0\}}+\beta\E[ V_k((\bq-\bfe_{\bnu})^++\bA+\bfe_{\bmu})]\\
&\quad + c_{\brho}+c_{\bomega}\cdot \indic_{\{q_{\bomega}>0\}}+\beta\E[ V_k((\bq-\bfe_{\bomega})^++\bA)]\\
=& c_{\bmu}+c_{\bnu}\cdot \indic_{\{q_{\bnu}>0\}} + c_{\brho}+c_{\bomega}\indic_{\{q_{\bomega}>0\}} + \beta\E[V_{k}(\bQ+\bz)+V_{k}(\bQ+\bw)] ,\\
V_{k+1}(\bq+\bfe_{\bmu})+V_{k+1}(\bq+\bfe_{\bmu}+\bfe_{\brho}) \geq& c_{\bmu}+c_{\bnu}\cdot \indic_{\{q_{\bnu}>0\}}+\beta\E[ V_k((\bq-\bfe_{\bnu})^++\bA)]\\
&\quad + c_{\brho}+c_{\bomega}\cdot \indic_{\{q_{\bomega}>0\}}+\beta\E[ V_k((\bq-\bfe_{\bomega})^++\bA+\bfe_{\bmu})]\\
=& c_{\bmu}+c_{\bnu}\cdot \indic_{\{q_{\bnu}>0\}} + c_{\brho}+c_{\bomega}\indic_{\{q_{\bomega}>0\}} + \beta\E[V_{k}(\bQ+\bx)+V_{k}(\bQ+\by)],
\end{align*}
where
$\bQ\eqdef (\bq-\bfe_{\bnu}-\bfe_{\bomega})^++\bA$,
$\bx\eqdef \bq-(\bq-\bfe_{\bnu})^++\bfe_{\bmu}$,
$\by\eqdef \bq-(\bq-\bfe_{\bomega})^+$,
$\bz\eqdef \bq-(\bq-\bfe_{\bomega})^++\bfe_{\bmu}$,
$\bw\eqdef \bq-(\bq-\bfe_{\bnu})^+$.
We also have $\bx,\by,\bz,\bw\in\{0,1\}^{4}$, $\bx\leq \bfe_{\bmu}+\bfe_{\bnu}$, $\by\leq\bfe_{\brho}+\bfe_{\bomega}$, $\bx+\by=\bz+\bw$,
and thus we obtain
\begin{align*}
V_{k+1}(\bq+\bfe_{\bmu})+V_{k+1}(\bq+\bfe_{\bmu}+\bfe_{\brho}) \geq & c_{\bmu}+c_{\bnu}\cdot \indic_{\{q_{\bnu}>0\}} + c_{\brho}+c_{\bomega}\cdot \indic_{\{q_{\bomega}>0\}} + \beta\E[V_{k}(\bQ+\bx)+V_{k}(\bQ+\by)]\\
\geq & c_{\bmu}+c_{\bnu}\cdot \indic_{\{q_{\bnu}>0\}} + c_{\brho}+c_{\bomega}\cdot \indic_{\{q_{\bomega}>0\}} + \beta\E[V_{k}(\bQ+\bz)+V_{k}(\bQ+\bw)] \\
=& V_{k+1}(\bq+2\bfe_{\bmu})+V_{k+1}(\bq+\bfe_{\brho}) ,
\end{align*}
where the second inequality follows from Lemma~\ref{LEMMA:AUX_INEQUALITY_ALT}.

Finally, we do not need to consider the fourth case. From the induction hypothesis, we know that $V_k(\cdot)$ satisfies \eqref{eq:two_queues}, \eqref{eq:three_queues_other} and \eqref{eq:three_queues_same}, and thus from Lemma \ref{lem:IH_switching_curve} we know that if the optimal action on $(\bq+2\bfe_{\bmu})$ is to serve queues $\brho$ and $\bomega$, then an optimal action on $\bq+\bfe_{\brho}$ is to again serve queues  $\brho$ and $\bomega$, and therefore the fourth case can be eliminated. 

Hence, \eqref{eq:two_queues} holds for the $(k+1)$-th value function $V_{k+1}$.

%
\textbf{Proof of \eqref{eq:three_queues_other} for $V_{k+1}$.}
Analogously following the proof of \eqref{eq:two_queues} for $V_{k+1}$,
we prove \eqref{eq:three_queues_other} of Proposition~\ref{PROP:SWITCHING_CURVE} for $n=k+1$.
The right-hand side of this equation 
involves $V_{k+1}(\bq+2\bfe_{\bmu}+\bfe_{\bnu})$ and  $V_{k+1}(\bq+\bfe_{\brho})$. By the definition of $V_{k+1}(\cdot)$ in \eqref{eq:BellmanEq}, we need to work with the maximization problem on the right-hand side of \eqref{eq:BellmanEq}. 
We consider four cases based on the optimal schedules at $\bq+2\bfe_{\bmu}$ and $\bq+\bfe_{\brho}$ in this maximization problem:
(1) Both optimal actions are to serve queues $\bmu$ and $\bnu$;
(2) Both optimal actions are to serve queues $\brho$ and $\bomega$;
(3) The optimal action on $\bq+2\bfe_{\bmu}+\bfe_{\bnu}$ is to serve queues $\bmu$ and $\bnu$, and the optimal action on $\bq+\bfe_{\brho}$ is to serve queues $\brho$ and $\bomega$;
(4) The optimal action on $\bq+2\bfe_{\bmu}+\bfe_{\bnu}$ is to serve queues $\brho$ and $\bomega$, and the optimal action on $\bq+\bfe_{\brho}$ is to serve queues $\bmu$ and $\bnu$.
Once again, by the induction hypothesis, we know from Lemma \ref{lem:IH_switching_curve} that if an optimal action on $\bq+2\bfe_{\bmu}+\bfe_{\bnu}$ is to serve queues $\brho$ and $\bomega$, it will continue to be an optimal action for $\bq+\bfe_{\brho}$, and thus we can ignore the fourth case.
We prove \eqref{eq:three_queues_other} for the $(k+1)$-th value function dealing with all three remaining cases.

First, suppose that both optimal actions are to transmit packets in queues $\bmu$ and $\bnu$ in the $(k+1)$-th value iteration. 
If $q_{\bmu}\geq 1$ and $q_{\bnu}\geq 1$, we have
\begin{align*}
V_{k+1}(\bq+2\bfe_{\bmu}+\bfe_{\bnu})+V_{k+1}(\bq+\bfe_{\brho}) &\\
& \hspace*{-1.0in} =c_{\bmu}+c_{\bnu}+\beta\E[ V_k(\bq+\bA+\bfe_{\bmu})] + c_{\bmu}+c_{\bnu}+\beta\E[ V_k( \bq+\bA-\bfe_{\bmu}-\bfe_{\bnu}+\bfe_{\brho})]\\
& \hspace*{-1.0in} \leq c_{\bmu}+c_{\bnu}+\beta\E[ V_k(\bq+\bA-\bfe_{\bnu}+\bfe_{\brho}) ] + c_{\bmu}+c_{\bnu}+\beta\E[ V_k(\bq+\bA)]\\
& \hspace*{-1.0in} \leq V_{k+1}(\bq+\bfe_{\bmu}+\bfe_{\brho})+V_{k+1}(\bq+\bfe_{\bmu}+\bfe_{\bnu}) ;
\end{align*}
here the first inequality follows from the induction hypothesis (substituting $\bq+\bA-\bfe_{\bmu}-\bfe_{\bnu}$ for $\bq$ in \eqref{eq:three_queues_other}
for $V_k$) and the second inequality follows from the definition of the value iteration.
If $q_{\bmu}\geq 1$ and $q_{\bnu}=0$, we obtain
\begin{align*}
V_{k+1}(\bq+2\bfe_{\bmu}+\bfe_{\bnu})+V_{k+1}(\bq+\bfe_{\brho}) =& c_{\bmu}+c_{\bnu}+\beta\E[ V_k(\bq+\bA+\bfe_{\bmu})]
+ c_{\bmu}+\beta\E[ V_k( \bq+\bA-\bfe_{\bmu}+\bfe_{\brho})]\\
\leq& c_{\bmu}+\beta\E[ V_k(\bq+\bA+\bfe_{\brho}) ] + c_{\bmu}+c_{\bnu}+\beta\E[ V_k(\bq+\bA)]\\
\leq& V_{k+1}(\bq+\bfe_{\bmu}+\bfe_{\brho})+V_{k+1}(\bq+\bfe_{\bmu}+\bfe_{\bnu}),
\end{align*}
where the first inequality follows from the induction hypothesis (substituting $\bq+\bA-\bfe_{\bmu}$ for $\bq$ in \eqref{eq:two_queues} for $V_k$)
and the second inequality follows from the definition of the value iteration.
Lastly, if $q_{\bmu}=0$, we have 
\begin{align*}
V_{k+1}(\bq+2\bfe_{\bmu}+\bfe_{\bnu})+V_{k+1}(\bq+\bfe_{\brho}) &\\
& \hspace*{-1.0in} =c_{\bmu}+c_{\bnu}+\beta\E[ V_k(\bq+\bA+\bfe_{\bmu})] + c_{\bnu}\cdot \indic_{\{q_{\bnu}>0\}}+\beta\E[ V_k( (\bq-\bfe_{\bnu})^++\bA+\bfe_{\brho})]\\
& \hspace*{-1.0in} \leq c_{\bmu}+c_{\bnu}\cdot \indic_{\{q_{\bnu}>0\}}+\beta\E[ V_k( (\bq-\bfe_{\bnu})^++\bA+\bfe_{\brho})] + c_{\bmu}+c_{\bnu}+\beta\E[ V_k(\bq+\bA)]\\
& \hspace*{-1.0in} \leq V_{k+1}(\bq+\bfe_{\bmu}+\bfe_{\brho})+V_{k+1}(\bq+\bfe_{\bmu}+\bfe_{\bnu}) ;
\end{align*}
here the first inequality follows from Proposition~\ref{prop:trivial_boundary} and the second inequality follows from the definition of the value iteration.

Second, assume that both optimal actions are to transmit packets in queues $\brho$ and $\bomega$ in the $(k+1)$-th value iteration.
If $q_{\brho}\geq 1$, we obtain
\begin{align*}
V_{k+1}(\bq+2\bfe_{\bmu}+\bfe_{\bnu})+V_{k+1}(\bq+\bfe_{\brho}) &\\
& \hspace*{-2.3in} =c_{\brho}+c_{\bomega}\cdot \indic_{\{q_{\bomega}>0\}} +\beta\E[ V_k((\bq-\bfe_{\bomega})^++\bA-\bfe_{\brho}+2\bfe_{\bmu}+\bfe_{\bnu})]
+c_{\brho}+c_{\bomega}\cdot \indic_{\{q_{\bomega}>0\}}+\beta\E[ V_k((\bq-\bfe_{\bomega})^++\bA)]\\
& \hspace*{-2.3in} \leq c_{\brho}+c_{\bomega}\cdot \indic_{\{q_{\bomega}>0\}}+\beta\E[ V_k((\bq-\bfe_{\bomega})^++\bA+\bfe_{\bmu})] +c_{\brho}+c_{\bomega}\cdot \indic_{\{q_{\bomega}>0\}}
+\beta\E[ V_k((\bq-\bfe_{\bomega})^++\bA-\bfe_{\brho}+\bfe_{\bmu}+\bfe_{\bnu})]\\
& \hspace*{-2.3in} \leq V_{k+1}(\bq+\bfe_{\bmu}+\bfe_{\brho})+V_{k+1}(\bq+\bfe_{\bmu}+\bfe_{\bnu}),
\end{align*}
where the first inequality follows from the induction hypothesis (substituting $(\bq-\bfe_{\bomega})^++\bA-\bfe_{\brho}$ for $\bq$ in \eqref{eq:three_queues_other} for $V_k$) and the second inequality follows from the definition of the value iteration.
On the other hand, if $q_{\brho}=0$, we have
\begin{align*}
V_{k+1}(\bq+2\bfe_{\bmu}+\bfe_{\bnu})+V_{k+1}(\bq+\bfe_{\brho}) &\\
& \hspace*{-2.2in} =c_{\bomega}\cdot \indic_{\{q_{\bomega}>0\}}+\beta\E[ V_k((\bq-\bfe_{\bomega})^++\bA+2\bfe_{\bmu}+\bfe_{\bnu})]
+c_{\brho}+c_{\bomega}\cdot \indic_{\{q_{\bomega}>0\}}+\beta\E[ V_k((\bq-\bfe_{\bomega})^++\bA)]\\
& \hspace*{-2.2in} \leq c_{\brho}+c_{\bomega}\cdot \indic_{\{q_{\bomega}>0\}}+\beta\E[ V_k((\bq-\bfe_{\bomega})^++\bA+\bfe_{\bmu})]
+c_{\bomega}\cdot \indic_{\{q_{\bomega}>0\}}+\beta\E[ V_k((\bq-\bfe_{\bomega})^++\bA+\bfe_{\bmu}+\bfe_{\bnu})]\\
& \hspace*{-2.2in} \leq V_{k+1}(\bq+\bfe_{\bmu}+\bfe_{\brho})+V_{k+1}(\bq+\bfe_{\bmu}+\bfe_{\bnu}) ;
\end{align*}
here the first inequality follows from the induction hypothesis (substituting $(\bq-\bfe_{\bomega})^++\bA$ for $\bq$ in \eqref{eq:for_three_different} for $V_k$)
and the second inequality follows from the definition of the value iteration.

Finally, suppose that the optimal action on $\bq+2\bfe_{\bmu}+\bfe_{\bnu}$ is to serve packets in queues $\bmu$ and $\bnu$
and the optimal action on $\bq+\bfe_{\brho}$ is to transmit packets in queues $\brho$ and $\bomega$ in $(k+1)$-th value iteration. 
Then, we obtain
\begin{align*}
V_{k+1}(\bq+2\bfe_{\bmu}+\bfe_{\bnu})+V_{k+1}(\bq+\bfe_{\brho}) =& c_{\bmu}+c_{\bnu}+\beta\E[ V_k(\bq+\bA+\bfe_{\bmu})]\\
&\quad +c_{\brho}+c_{\bomega}\cdot \indic_{\{q_{\bomega}>0\}}+\beta\E[ V_k((\bq-\bfe_{\bomega})^++\bA)]\\
=& (c_{\bmu}+c_{\bnu}+c_{\brho}+c_{\bomega}\cdot \indic_{\{q_{\bomega}>0\}}) +\beta\E[V_{k}(\bQ+\bz)+V_{k}(\bQ+\bw)],\\
V_{k+1}(\bq+\bfe_{\bmu}+\bfe_{\brho})+V_{k+1}(\bq+\bfe_{\bmu}+\bfe_{\bnu}) \geq& c_{\brho}+c_{\bomega}\cdot \indic_{\{q_{\bomega}>0\}}+\beta\E[ V_k((\bq-\bfe_{\bomega})^++\bA+\bfe_{\bmu})]\\
&\quad +c_{\bmu}+c_{\bnu}+\beta\E[ V_k(\bq+\bA)]\\
=&(c_{\bmu}+c_{\bnu}+c_{\brho}+c_{\bomega}\cdot \indic_{\{q_{\bomega}>0\}})+\beta\E[V_{k}(\bQ+\bx)+V_{k}(\bQ+\by)],
\end{align*}
where
$\bQ\eqdef (\bq-\bfe_{\bomega})^++\bA$,
$\bx\eqdef \bfe_{\bmu}$,
$\by\eqdef \bq-(\bq-\bfe_{\bomega})^+$,
$\bz\eqdef \bfe_{\bmu}+\bq-(\bq-\bfe_{\bomega})^+$,
$\bw\eqdef \bzero$.
We also have $\bx,\by,\bz,\bw\in\{0,1\}^{4}$, $\bx\leq \bfe_{\bmu}+\bfe_{\bnu}$, $\by\leq\bfe_{\brho}+\bfe_{\bomega}$, $\bx+\by=\bz+\bw$,
and thus we obtain
\begin{equation*}
V_{k+1}(\bq+2\bfe_{\bmu}+\bfe_{\bnu})+V_{k+1}(\bq+\bfe_{\brho}) \leq  V_{k+1}(\bq+\bfe_{\bmu}+\bfe_{\brho})+V_{k+1}(\bq+\bfe_{\bmu}+\bfe_{\bnu}) ,
\end{equation*}
which follows from Lemma~\ref{LEMMA:AUX_INEQUALITY_ALT}.

Hence, \eqref{eq:three_queues_other} holds for the $(k+1)$-th value function $V_{k+1}$.

%
\textbf{Proof of \eqref{eq:three_queues_same} for $V_{k+1}$.}
Analogously following the proof of \eqref{eq:three_queues_other} for $V_{k+1}$,
we prove \eqref{eq:three_queues_same} of Proposition~\ref{PROP:SWITCHING_CURVE} for $n=k+1$.
The right-hand side of this equation 
involves $V_{k+1}(\cdot)$ at $\bq+2\bfe_{\bmu}$ and  $\bq+\bfe_{\brho}+\bfe_{\bomega}$. We consider the four cases corresponding to the maximizers at each of these values in the $(k+1)$-th iteration of value iteration \eqref{eq:BellmanEq}. When the optimal action on $\bq+2\bfe_{\bmu}$ is to serve queues  $\brho$ and $\bomega$, we know from the induction hypothesis and Lemma \ref{lem:IH_switching_curve} that serving  queues  $\brho$ and $\bomega$ continues to be an optimal action on $\bq+\bfe_{\brho}+\bfe_{\bomega}$. Therefore, it is sufficient to consider the following three cases based on the optimal schedules at $\bq+2\bfe_{\bmu}$ and $\bq+\bfe_{\brho}$:
(1) Both optimal actions are to serve queues $\bmu$ and $\bnu$;
(2) Both optimal actions are to serve queues $\brho$ and $\bomega$;
(3) The optimal action on $\bq+2\bfe_{\bmu}$ is to serve queues $\bmu$ and $\bnu$ and the optimal action on $\bq+\bfe_{\brho}+\bfe_{\bomega}$ is to serve queues $\brho$ and $\bomega$.
We prove \eqref{eq:three_queues_same} for the $(k+1)$-th value function dealing with all three cases.

First, suppose that both optimal actions are to transmit packets in queues $\bmu$ and $\bnu$ in the $(k+1)$-th value iteration. If $q_{\bmu}\geq 1$, we have
\begin{align*}
V_{k+1}(\bq+2\bfe_{\bmu})+V_{k+1}(\bq+\bfe_{\brho}+\bfe_{\bomega}) &\\
& \hspace*{-2.5in} =c_{\bmu}+c_{\bnu}\cdot \indic_{\{q_{\bnu}>0\}}+\beta\E[ V_k((\bq-\bfe_{\bnu})^++\bA+\bfe_{\bmu})] + c_{\bmu}+c_{\bnu}\cdot \indic_{\{q_{\bnu}>0\}}
+\beta\E[ V_k((\bq-\bfe_{\bnu})^++\bA-\bfe_{\bmu}+\bfe_{\brho}+\bfe_{\bomega})]\\
& \hspace*{-2.5in} \leq c_{\bmu}+c_{\bnu}\cdot \indic_{\{q_{\bnu}>0\}}+\beta\E[ V_k((\bq-\bfe_{\bnu})^++\bA+\bfe_{\brho}+\bfe_{\bomega}) ]
+ c_{\bmu}+c_{\bnu}\cdot \indic_{\{q_{\bnu}>0\}}+\beta\E[ V_k((\bq-\bfe_{\bnu})^++\bA)]\\
& \hspace*{-2.5in} \leq V_{k+1}(\bq+\bfe_{\bmu}+\bfe_{\brho}+\bfe_{\bomega})+V_{k+1}(\bq+\bfe_{\bmu}) ;
\end{align*}
here the first inequality follows from the induction hypothesis (substituting $(\bq-\bfe_{\bnu})^++\bA-\bfe_{\bmu}$ for $\bq$ in \eqref{eq:three_queues_same} for $V_k$)
and the second inequality follows from the definition of the value iteration.
On the other hand, if $q_{\bmu}=0$, we obtain
\begin{align*}
V_{k+1}(\bq+2\bfe_{\bmu})+V_{k+1}(\bq+\bfe_{\brho}+\bfe_{\bomega}) &\\
& \hspace*{-2.0in} =c_{\bmu}+c_{\bnu}\cdot \indic_{\{q_{\bnu}>0\}}+\beta\E[ V_k((\bq-\bfe_{\bnu})^++\bA+\bfe_{\bmu})]
+ c_{\bnu}\cdot \indic_{\{q_{\bnu}>0\}}+\beta\E[ V_k((\bq-\bfe_{\bnu})^++\bA+\bfe_{\brho}+\bfe_{\bomega})]\\
& \hspace*{-2.0in} \leq c_{\bmu}+c_{\bnu}\cdot \indic_{\{q_{\bnu}>0\}}+\beta\E[ V_k((\bq-\bfe_{\bnu})^++\bA+\bfe_{\brho}+\bfe_{\bomega})]
+ c_{\bmu}+c_{\bnu}\cdot \indic_{\{q_{\bnu}>0\}}+\beta\E[ V_k((\bq-\bfe_{\bnu})^++\bA)]\\
& \hspace*{-2.0in} \leq V_{k+1}(\bq+\bfe_{\bmu}+\bfe_{\brho}+\bfe_{\bomega})+V_{k+1}(\bq+\bfe_{\bmu}),
\end{align*}
where the first inequality follows from Proposition~\ref{prop:trivial_boundary} and the second inequality follows from the definition of the value iteration.

Second, assume that both optimal actions are to transmit packets in queues $\brho$ and $\bomega$ in the $(k+1)$-th value iteration.
However, if $q_{\brho}=0$ and $q_{\bomega}=0$, the optimal action cannot be optimal on $\bq+2\bfe_{\bmu}$, as this would imply transmitting nothing to be optimal which is obviously not optimal. 
Hence,
one of $q_{\brho}$ and $q_{\bomega}$ should not be zero.
If $q_{\brho}\geq 1$ and $q_{\bomega}\geq 1$, we have
\begin{align*}
V_{k+1}(\bq+2\bfe_{\bmu})+V_{k+1}(\bq+\bfe_{\brho}+\bfe_{\bomega}) &\\
& \hspace*{-1.0in} =c_{\brho}+c_{\bomega}+\beta\E[ V_k(\bq-\bfe_{\brho}-\bfe_{\bomega}+\bA+2\bfe_{\bmu})] + c_{\brho}+c_{\bomega}+\beta\E[ V_k(\bq+\bA)]\\
& \hspace*{-1.0in} \leq c_{\brho}+c_{\bomega}+\beta\E[ V_k(\bq+\bA+\bfe_{\bmu}) ] + c_{\brho}+c_{\bomega}+\beta\E[ V_k(\bq-\bfe_{\brho}-\bfe_{\bomega}+\bA+\bfe_{\bmu})]\\
& \hspace*{-1.0in} \leq V_{k+1}(\bq+\bfe_{\bmu}+\bfe_{\brho}+\bfe_{\bomega})+V_{k+1}(\bq+\bfe_{\bmu}) ;
\end{align*}
here the first inequality follows from the induction hypothesis (substituting $\bq-\bfe_{\brho}-\bfe_{\bomega}+\bA$ for $\bq$ in \eqref{eq:three_queues_same} for $V_k$)
and the second inequality follows from the definition of the value iteration. 
If only one of $q_{\brho}$ and $q_{\bomega}$ is zero, assume without loss of generality that $q_{\brho}\geq 1$ and $q_{\bomega}=0$.
We then obtain
 \begin{align*}
V_{k+1}(\bq+2\bfe_{\bmu})+V_{k+1}(\bq+\bfe_{\brho}+\bfe_{\bomega}) =& c_{\brho}+\beta\E[ V_k(\bq-\bfe_{\brho}+\bA+2\bfe_{\bmu})]
+ c_{\brho}+c_{\bomega}+\beta\E[ V_k(\bq+\bA)]\\
\leq& c_{\brho}+c_{\bomega}+\beta\E[ V_k(\bq+\bA+\bfe_{\bmu}) ]
+ c_{\brho}+\beta\E[ V_k(\bq-\bfe_{\brho}+\bA+\bfe_{\bmu})]\\
\leq& V_{k+1}(\bq+\bfe_{\bmu}+\bfe_{\brho}+\bfe_{\bomega})+V_{k+1}(\bq+\bfe_{\bmu}),
\end{align*}
where the first inequality follows from the induction hypothesis (substituting $\bq-\bfe_{\brho}+\bA$ for $\bq$ in \eqref{eq:two_queues} for $V_k$)
and the second inequality follows from the definition of the value iteration. 

Finally, suppose that the optimal action on $(\bq+2\bfe_{\bmu})$ is to serve packets in queues $\bmu$ and $\bnu$
and the optimal action on $\bq+\bfe_{\brho}+\bfe_{\bomega}$ is to transmit packets in queues $\brho$ and $\bomega$ in the $(k+1)$-th value iteration.
We then have
\begin{align*}
V_{k+1}(\bq+2\bfe_{\bmu})+V_{k+1}(\bq+\bfe_{\brho}+\bfe_{\bomega}) \qquad\quad &\\
& \hspace*{-2.6in} = c_{\bmu}+c_{\bnu}\cdot \indic_{\{q_{\bnu}>0\}}+\beta\E[ V_k((\bq-\bfe_{\bnu})^++\bA+\bfe_{\bmu})] + c_{\brho}+c_{\bomega}+\beta\E[ V_k(\bq+\bA)]\\
& \hspace*{-2.6in} = (c_{\bmu}+c_{\bnu}\cdot \indic_{\{q_{\bnu}>0\}}+c_{\brho}+c_{\bomega})  
+ \beta\E[ V_k(\bQ+\bz)+V_k(\bQ+\bw)],\\
V_{k+1}(\bq+\bfe_{\bmu})+V_{k+1}(\bq+\bfe_{\bmu}+\bfe_{\brho}+\bfe_{\bomega}) \qquad &\\
& \hspace*{-2.6in} \geq c_{\bmu}+c_{\bnu}\cdot \indic_{\{q_{\bnu}>0\}}+\beta\E[ V_k((\bq-\bfe_{\bnu})^++\bA)] + c_{\brho}+c_{\bomega}+\beta\E[ V_k(\bq+\bA+\bfe_{\bmu}) ]\\
& \hspace*{-2.6in} = (c_{\bmu}+c_{\bnu}\cdot \indic_{\{q_{\bnu}>0\}}+c_{\brho}+c_{\bomega}) 
+ \beta\E[ V_k(\bQ+\bx)+V_k(\bQ+\by)],
\end{align*}
where
$\bQ\eqdef (\bq-\bfe_{\bnu})^++\bA$,
$\bx\eqdef \bfe_{\bmu}+\bq-(\bq-\bfe_{\bnu})^+$,
$\by\eqdef \bzero$,
$\bz\eqdef \bfe_{\bmu}$,
$\bw\eqdef \bq-(\bq-\bfe_{\bnu})^+$.
We also have $\bx,\by,\bz,\bw\in\{0,1\}^{4}$, $\bx\leq \bfe_{\bmu}+\bfe_{\bnu}$, $\by\leq\bfe_{\brho}+\bfe_{\bomega}$, $\bx+\by=\bz+\bw$,
and thus we obtain
\begin{align*}
V_{k+1}(\bq+\bfe_{\bmu})+V_{k+1}(\bq+\bfe_{\bmu}+\bfe_{\brho}+\bfe_{\bomega})
	\geq&  V_{k+1}(\bq+2\bfe_{\bmu})+V_{k+1}(\bq+\bfe_{\brho}+\bfe_{\bomega}),
\end{align*}
which follows from Lemma~\ref{LEMMA:AUX_INEQUALITY_ALT}.

Hence, \eqref{eq:three_queues_same} holds for the $(k+1)$-th value function $V_{k+1}$.

\subsection{Identifying the Optimal Policy}

\subsubsection{Proof of Proposition~\ref{PROP:SYMMETRIC}.}
The proof is by induction on $n$.
First, $V_0$ satisfies \eqref{eq:symmetric} because $V_0=0$. Next, suppose that \eqref{eq:symmetric} holds for $n=k$.
Let $\bq=x\,\bfe_{\brho}+y\,\bfe_{\bomega}+z\,\bfe_{\bmu}+w\,\bfe_{\bnu}$ and $\bq^\prime=z\,\bfe_{\brho}+w\,\bfe_{\bomega}+x\,\bfe_{\bmu}+y\,\bfe_{\bnu}$.
We then have
\begin{align*}
        r(\bq,\bs_1)=r(\bq^\prime,\bs_2), \qquad & \qquad
        \E[V_k((\bq-\bs_1)^++\bA)]=\E[V_k((\bq^\prime-\bs_2)^++\bA)],\\
        r(\bq,\bs_2)=r(\bq^\prime,\bs_1), \qquad & \qquad
        \E[V_k((\bq-\bs_2)^++\bA)]=\E[V_k((\bq^\prime-\bs_1)^++\bA)] .
\end{align*}
From the i.i.d.\ assumption and the induction hypothesis on $n=k$, the Bellman equation~\eqref{eq:BellmanEq} yields $V_{k+1}(\bq)=V_{k+1}(\bq^\prime)$, which means that \eqref{eq:symmetric} holds for any value function.

\subsubsection{Proof of Theorem~\ref{thm:optimal}.}
Using the notation that $\IndSet=\{\brho,\bomega,\bmu,\bnu\}$ with $\brho\#\bmu$ and $\brho\#\bnu$,
we consider each of the three possible regional cases as follows.

\begin{enumerate}
\item[\textit{Case (i)}:] 
Since the possible maximum reward $r_{\max}$ is $2$, state $\bq$ is in the interior if and only if $r(\bq,\bs)=2$ for some schedule $\bs\in\cP$.
Therefore, if a size-$2$ schedule exists, the state is in the interior and by Theorem~\ref{thm:interior}, selecting such a schedule is optimal.
\item[\textit{Case (ii)}:]
Without loss of generality, we assume that $\bq=x\,\bfe_{\brho}+z\,\bfe_{\bmu}$ with $x,z\in\Nats$ and $x\geq z$.
Further, let $\bs_1=\bfe_{\brho}+\bfe_{\bomega}$ and $\bs_2=\bfe_{\bmu}+\bfe_{\bnu}$.
Then, assuming first that $x=z$, we have
\begin{align*}
r(\bq,\bs_1)+\beta\E[V_n((\bq-\bs_1)^+ +\bA)] &=1+\beta\E[V_n((x-1+A_{\brho})\bfe_{\brho}+A_{\bomega}\bfe_{\bomega}+(x+A_{\bmu})\bfe_{\bmu}+A_{\bnu}\bfe_{\bnu} )]\\
&=1+\beta\E[V_n((x+A_{\bmu})\bfe_{\brho}+A_{\bnu}\bfe_{\bomega}+(x-1+A_{\brho})\bfe_{\bmu}+A_{\bomega}\bfe_{\bnu} )] \\
&=1+\beta\E[V_n((x+A_{\brho})\bfe_{\brho}+A_{\bomega}\bfe_{\bomega}+(x-1+A_{\bmu})\bfe_{\bmu}+A_{\bnu}\bfe_{\bnu} )] \\
& =r(\bq,\bs_2)+\beta\E[V_n((\bq-\bs_2)^++\bA)],
\end{align*}
	where the second equation comes from \eqref{eq:symmetric} and the third equation follows from the fact that $A_{\bmu}$, $A_{\bnu}$ $A_{\brho}$ and $A_{\omega}$ are
i.i.d.
Any schedule is therefore optimal.
Second, when $x>z$, we apply Theorem~\ref{thm:switching_curve} to $\bq=z\,\bfe_{\rho}+z\,\bfe_{\bmu}$, from which serving queue $\brho$ is optimal.
Then, the schedule is also optimal for $\bq+(x-z)\,\bfe_{\brho}=x\,\bfe_{\brho}+z\,\bfe_{\bmu}$.
\item[\textit{Case (iii)}:] 
If a state does not belong to any of above cases, it is in the trivial boundary. Therefore, by Theorem~\ref{thm:trivial_boundary}, the unique size-$1$ schedule is optimal.
%
%
\end{enumerate}

\subsubsection{Proof of Theorem~\ref{thm:convergence}.}
From the proof of Proposition~\ref{prop:alt_formulation}, we obtain
\begin{equation*}
(1-\beta)J_\beta(\bq,\pi_\ell)=c^\pi(0)+g-\beta \tilde{J}_\beta(\bq, \pi_\ell), \qquad\qquad (1-\beta)V^*_\beta=c^\pi(0)+g-\beta \tilde{V}^*_\beta,
\end{equation*}
where $g=\sum_{t=0}^\infty\beta^{t+1}\,\E[ \sum_{\brho\in\IndSet}c_{\brho} A_{\brho}(t)]$.
Subtracting the second equation from the first yields
\begin{equation*}
	\norm{J_\beta(\,\cdot\,,\pi_\ell)-V_\beta^*}=\frac{\beta}{1-\beta}\norm{\tilde{J}_\beta(\,\cdot\,,\pi_\ell)-\tilde{V}^*_\beta}.
\end{equation*}
On the other hand, from Theorem 6.3.1 in \citep{Pute05}, we have that $\tilde{J}(\,\cdot\,,\pi_\ell)$ converges to $\tilde{V}^*_\beta$ and
$\norm{\tilde{J}(\,\cdot\,,\pi_\ell)-\tilde{V}_\beta^*}<\frac{1-\beta}{\beta}\varepsilon$ when \eqref{eq:conv_assume} holds, which implies the desired results.

\section{Computational Experiments}
\label{sec:experiments}
Our main theoretical results establish an optimal scheduling policy for the $2\times 2$ input-queued switch model.
To further investigate issues of delay-cost optimality, we consider in this section a representative sample of results from numerous computational experiments
on the performance of these optimal solutions to the optimization problem \eqref{prob:opt_tilde_J} in comparison with variants of the MaxWeight scheduling
policy.
The case of symmetric arrivals and unit costs across all queues is studied first,
in which case we have an explicit optimal policy from the results of Section~\ref{sec:optimal:symmetric},
followed by a study of the general case for arrival and cost vectors across all queues,
in which case we have an asymptotically optimal policy from the results of Section~\ref{sec:optimal:general}.

\subsection{Symmetric Arrivals and Unit Costs}
\label{sec:experiments:symmetric}
For the case of symmetric arrivals and unit costs, our theoretical results show that Algorithm~1 provides an explicit optimal solution to
both problems~\eqref{prob:opt_tilde_J} and \eqref{prob:opt_J} for any discount factor $\beta \in (0,1)$.
It is important to note key differences in decisions between the policy of Algorithm~1 and the MaxWeight scheduling policy.
Both policies will take similar actions in the interior region (i.e., when all four queues are non-empty) given that Algorithm~1 will choose either of the two
schedules and the MaxWeight policy will choose the one with the highest queue-length weight (i.e., the summation of both queue lengths), consistent with case (i) in Algorithm~1.
On the other hand, outside of the interior region, there can exist situations consisting of one size-$2$ schedule having queue-length weight $w_2$ and one size-$1$ schedule
having queue-length weight $w_1$ where $w_1 > w_2$ (i.e., the one queue length $w_1$ is greater than the summation $w_2$ of both queue lengths);
in such situations, Algorithm~1 will choose the size-$2$ schedule and the MaxWeight policy will choose the size-$1$ schedule, thus violating case (i) of Algorithm~1.

Now consider a policy that follows Algorithm~1 by always choosing the maximal weight schedule when in the interior region (i.e., selecting a size-$2$ schedule
with the largest summation of both queue lengths).
We call such a policy ``Maximum Size with Maximal Weight'' (MSMW) because it always selects a maximum-size schedule according to Algorithm~1 but breaks ties among
size-$2$ schedules by giving priority to a maximal weight schedule.
Next, suppose instead that the ties in case (i) of Algorithm~1 are broken using the logarithm of queue lengths as weights;
we then obtain the MSMW-log policy proposed by \cite{ShaWis_11}.
Hence, our optimal scheduling solution in the case of symmetric arrivals and unit costs (Algorithm~1) subsumes both the MSMW and MSMW-log policies,
as well as extending the optimality of the SOP policy in \citep{Saswati_Switch_buffer} beyond finite buffers.
For comparison, we also consider an additional policy of interest, denoted as the MaxSize algorithm, that consists of selecting in every time slot a schedule with a
maximal size (i.e., the maximal number of non-empty queues), breaking ties uniformly at random;
a key difference between MaxSize and Algorithm~1 is in the tie-breaking rule for case (ii), where the optimal policy serves the size-$1$ schedule with the longest queue.

To quantify the performance benefits of Algorithm~1 and to investigate issues related to the delay optimality of MaxWeight scheduling within the setting of our model
and formulation of Section~\ref{sec:model}, we use simulation to compare the performance of our optimal policy of Algorithm~1 with that of MaxWeight scheduling and
the other scheduling policy alternatives above.
Specifically, we consider a $2 \times 2$ switch with i.i.d.\ Bernoulli arrivals at each input port and unit costs ($c_{\brho}=1$ for all $\brho\in\IndSet$).
When the arrival rate for each queue is $\lambda$, the traffic intensity is $\rho = 2\lambda$. 
For the purpose of simulations, we assume that service takes place after arrivals within a time slot.
Since the research literature has focused on studying the performance of MaxWeight scheduling in steady state (as opposed to our optimal scheduling results
based on discounted delay cost) and the heavy-traffic regime, the simulation results presented in this section are based on queue lengths in both steady state and heavy traffic, in
order to provide direct comparisons with these previous MaxWeight scheduling results.
We also refer to Remark~\ref{rem:AC-MDP} concerning connections between Algorithm~1 and average-cost optimal policies.

Figure~\ref{fig1:steady-state} compares the expected summation of all queue lengths in steady state under the MSMW (Algorithm~1), MaxWeight and MaxSize algorithms.
Recalling that different optimal algorithms can be defined based on the tie-breaking rule in case (i) of Algorithm~1, we chose to consider the MSMW algorithm that
breaks ties by selecting the schedule with the maximal weight defined as the summation of the queue lengths in that schedule.
In contrast, under the MaxWeight algorithm, a schedule with a maximal weight is selected in every time slot, where the weight is calculated according to the queue
lengths;
and under the MaxSize algorithm, a schedule with a maximal size is selected in every time slot.
These key differences among the definitions of the various scheduling policies are fundamental to the performance differences exhibited among the scheduling policies.
We may also consider MSMW-log where the weights are logarithms of queue lengths.
However, in a $2 \times 2$ switch, the performance difference may not be important because of the tie-breaking rule among size-two matchings based on the proofs of
our theoretical results for the infinite-horizon discounted cost problem;
and, among size-one matchings, both MSMW and MSMW-log select the same schedule.
Figure~\ref{fig1:steady-state} also plots the universal lower bound established by~\cite{LuMaSq+18} which reduces to $\rho^2/2(1-\rho)$ for the model under
consideration.

\begin{figure}[htbp]
\begin{subfigure}{0.5\textwidth}
\centerline{\includegraphics[width=3in]{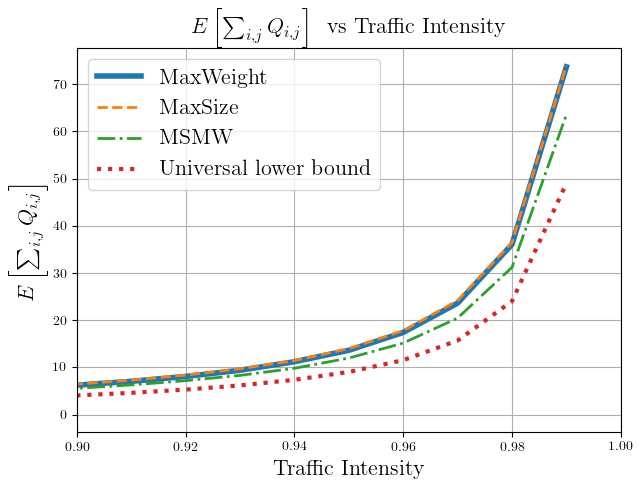}}
\caption{Steady-state comparison.}
\label{fig1:steady-state}
\end{subfigure}
\begin{subfigure}{0.5\textwidth}
\centerline{\includegraphics[width=3in]{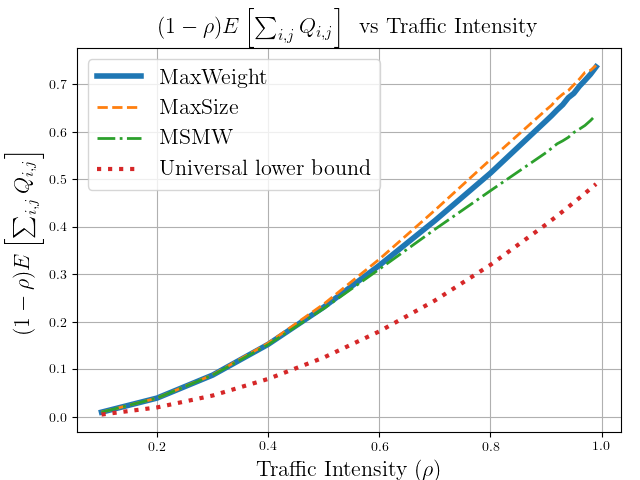}}
\caption{Heavy-traffic comparison.}
\label{fig1:heavy-traffic}
\end{subfigure}
\caption{Performance comparisons of MSMW, MaxWeight, MaxSize and universal lower bound.}
\label{fig1:symmetric}
\end{figure}

We observe that the queue lengths under all algorithms and the lower bound blow up to infinity as the traffic intensity approaches $1$,
which is as expected since the universal lower bound is $\Theta\left(\frac{1}{1-\rho}\right)$.
While it has been shown that MaxSize may not be stable for all arrival rates within the capacity region of a $2 \times 2$ switch (see~\cite{mckeown_maxsizeunstable}),
our simulation results suggest that it is stable in the case of symmetric arrivals and unit costs.
Although the performance differences among the algorithms are small in light traffic, Figure~\ref{fig1:steady-state} indicates that MSMW performs better in heavy traffic.
We also note that there is a gap between the MSMW performance and the universal lower bound.
Since MSMW, as an instance of Algorithm~1, indeed minimizes the infinite-horizon discounted queue-length problem in the case of symmetric arrivals and unit costs,
this may suggest that the universal lower bound is loose.
Such questions around the tightness of the universal lower bound have been raised in the research literature within the context of MaxWeight scheduling;
see, e.g.,~\cite{MagSri_SSY16_Switch,LuMaSq+19}.
Our results suggest that the universal lower bound is loose with respect to the optimal scheduling policy of Algorithm~1.

Given that the queue lengths are $O\left(\frac{1}{1-\rho}\right)$, we plot in Figure~\ref{fig1:heavy-traffic} the corresponding normalized queue lengths,
i.e., the queue lengths multiplied by $(1-\rho)$ which is called heavy-traffic scaling.
We use $\epsilon$ to denote $(1-\rho)=(1-2\lambda)$ where $\epsilon$ is called the heavy-traffic parameter.
The differences among the curves are more clearly evident in this figure, with MSMW performing better in heavy traffic and with not much difference between MaxWeight and MaxSize.
The limiting point of the curves in Figure~\ref{fig1:heavy-traffic} is called the heavy traffic limit,
where this limit for MaxWeight and the universal lower bound has been shown to be $0.75$ and $0.5$, respectively (see~\cite{MagSri_SSY16_Switch});
both of these results match our simulation results.
The limit under MSMW appears to be around $0.65$.


\subsection{General Case}
\label{sec:experiments:general}
For the case of general arrivals and costs, given the previously noted difficulty of deriving an explicit switching curve for the optimal scheduling policy in general,
our theoretical results show that the look-ahead policy of Section~\ref{sec:optimal:general} provides an asymptotically optimal solution to both problems~\eqref{prob:opt_tilde_J}
and \eqref{prob:opt_J} for any discount factor $\beta \in (0,1)$.
We therefore now consider the performance of the $\ell$-th look-ahead policy $\pi_\ell$ in \eqref{look-ahead.policy} of Section~\ref{sec:optimal:general}.
From value iteration on the optimization problem \eqref{prob:opt_tilde_J} starting with $V_0=0$, as in Section~\ref{sec:alt_formulation},
the class of look-ahead policies $\pi_\ell$ exploits the $\ell$-th value function as the approximation of an optimal solution.
Some of the important benefits of this class of policies include those noted in Section~\ref{sec:optimal:general}, such as only needing to determine the optimal actions
of the look-ahead policy for states in the critical boundary by exploiting our optimal results for the interior and trivial boundary, and especially
Theorem~\ref{thm:convergence} which establishes $\pi_\ell$ to be asymptotically optimal with respect to the degree of look ahead $\ell$.

We use simulation to compare the performance characteristics of our class of look-ahead policies, for different values of $\ell$, against the
corresponding performance characteristics of variants of MaxWeight scheduling.
This includes the standard MaxWeight policy, denoted by MWS, which chooses a schedule that has larger total number of packets than the other schedule.
The weighted MaxWeight policy, considered by~\cite{LuMaSq+18,LuMaSq+19} and denoted here as C-MWS, chooses a schedule that has the larger weight than the other
schedule where the weight is a linear function of the queue lengths and the cost coefficients;
e.g., packets from queues $(1,1)$ and $(2,2)$ are transmitted when $c_{11}q_{11}+c_{22}q_{11}>c_{12}q_{12}+c_{21}q_{21}$.

To quantify the performance benefits of the look-ahead policy $\pi_\ell$ as a function of $\ell$ and to investigate issues related to the delay-cost optimality of
the various scheduling policies, we obtain from simulation the expected total discounted queue length of the $\ell$-th look-ahead policy (with look ahead step size $\ell$)
and compare these performance results with the corresponding results for MWS and C-MWS.
Figure~\ref{fig2:discounted} presents a representative example of these simulation experiment results, together with 95\% confidence intervals,
under arrival rates $\lambda_{11}=0.7$, $\lambda_{22}=0.5$, $\lambda_{12}=0.2$ and $\lambda_{21}=0.29$, cost vectors $c_{11}=2$, $c_{22}=2$, $c_{12}=10$ and $c_{21}=10$,
and discount factor $\beta=0.99$, taken over $1000$ samples.

\begin{figure}[htbp]
\begin{subfigure}{0.5\textwidth}
\centerline{\includegraphics[width=3in]{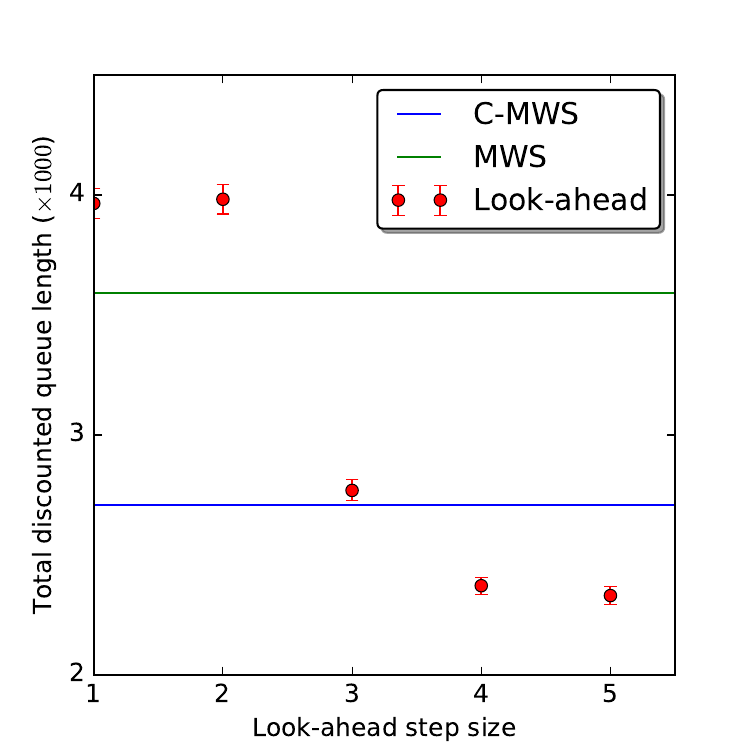}}
\caption{Total discounted queue length.}
\label{fig2:discounted}
\end{subfigure}
\begin{subfigure}{0.5\textwidth}
\centerline{
\begin{tabular}{cc}
  \hline
  Step Size & Relative optimality gap \\
  \hline
  1 & -31.76\\
  2 & -32.06\\
  3 & -2.30\\
  4 & 14.06\\
  5 & 16.07\\
  \hline
\end{tabular}
}
\caption{Relative optimality gap.}
\label{tbl:step_gap}
\end{subfigure}
\caption{Performance comparisons of $\ell$-th look-ahead policy, MaxWeight (MWS), and weighted MaxWeight (C-MWS).}
\label{fig2:general}
\end{figure}

We observe from these and related simulation experiments~~--~~taken over various arrival distributions, various arrival rates, and various cost coefficients~~--~~that
the performance of the look-ahead policy is close to the optimal performance when the step size $\ell$ is greater than or equal to $4$.
(Note that results for $\ell=6,\ldots,10$ are essentially identical to those depicted for $\ell=5$.)
We further observe from these and related simulation experiments that the look-ahead policies are good approximations to the optimal solution of problem \eqref{prob:opt_J}
even when the look ahead step size is relatively small, where the optimality gap varies from $7\%$ to $16\%$ depending on the experimental settings.
Table~\ref{tbl:step_gap} presents the relative optimality gaps between C-MWS and the look-ahead policies corresponding to the simulation results in Figure~\ref{fig2:discounted}.

\section{Conclusions}
\label{sec:conc}
Our primary goal in this paper has been to study the canonical $2\times 2$ input-queued switch and derive an optimal scheduling policy with respect to discounted
delay (equivalently, via Little's Law, queue-length) cost over an infinite time horizon.
This includes establishing that $c\mu$ is optimal in the interior region and in some cases of the critical boundary region, and that a switching curve is otherwise
optimal in the critical boundary region, for which we also provide an asymptotically optimal policy.
Our collection of theoretical results, which further include establishing theoretical properties corresponding to our optimal scheduling solution,
are expected to be of interest more broadly than input-queued switches.
We also conducted computational experiments that demonstrate and quantify the benefits of our optimal scheduling policy over alternative policies such as variants
of MaxWeight scheduling.
The fundamental insights gained from these results motivate our analysis and results for the general input-queued switch in the sequel.

%
%
%

\begin{APPENDICES}


\mss{
\section{Proof of {\normalsize \eqref{eq:for_two}} for \boldmath{\normalsize $V_{k+1}$}}
\label{sub:proof_of_eq:for_two_for_}
}
We prove that equation \eqref{eq:for_two} of Proposition~\ref{Prop:7} for $n=k+1$ holds in each case depending on the three optimal actions of the $(k+1)$-th value iteration on $\bq$ and $\bq+2\bfe_{\bmu}$.
The fourth case can again be ignored because, from the induction hypothesis and Lemma \ref{lem:IH_switching_curve}, we know that if serving queues $\bmu$ and $\bnu$ is an optimal action on $\bq$ in the $(k+1)$-th value iteration, then it will continue to be an optimal action on $\bq+2\bfe_{\bmu}$.

First, assume that both optimal actions are serving queues $\bmu$ and $\bnu$.
If $q_{\bmu}>0$, we have
\begin{align*}
V_{k+1}(\bq)+V_{k+1}(\bq+2\bfe_{\bmu}) \quad &\\
& \hspace*{-1.6in} =c_{\bmu}+c_{\bnu}\cdot \indic_{\{q_{\bnu}>0\}} + \E[V_{k}((\bq-\bfe_{\bnu})^++\bA-\bfe_{\bmu} )] 
+ c_{\bmu}+c_{\bnu}\cdot \indic_{\{q_{\bnu}>0\}} + \E[V_{k}( (\bq-\bfe_{\bnu})^++\bA+\bfe_{\bmu} )]\\
& \hspace*{-1.6in} \leq c_{\bmu}+c_{\bnu}\cdot \indic_{\{q_{\bnu}>0\}} + \E[V_{k}( (\bq-\bfe_{\bnu})^++\bA )]
+ c_{\bmu}+c_{\bnu}\cdot \indic_{\{q_{\bnu}>0\}} + \E[V_{k}( (\bq-\bfe_{\bnu})^++\bA )]
\leq 2\,V_{k+1}(\bq+\bfe_{\bmu});
\end{align*}
here the first inequality follows from the induction hypothesis (substituting $(\bq-\bfe_{\bnu})^++\bA-\bfe_{\bmu}$ for $\bq$ in \eqref{eq:for_two} for $V_k$) and the second inequality follows from the definition of the value iteration. 
On the other hand, if $q_{\bmu}=0$, we obtain
\begin{align*}
V_{k+1}(\bq)+V_{k+1}(\bq+2\bfe_{\bmu}) \quad &\\
& \hspace*{-1.6in} =c_{\bnu}\cdot \indic_{\{q_{\bnu}>0\}} + \E[V_{k}( (\bq-\bfe_{\bnu})^++\bA )]
+ c_{\bmu}+c_{\bnu}\cdot \indic_{\{q_{\bnu}>0\}} + \E[V_{k}( (\bq-\bfe_{\bnu})^++\bA+\bfe_{\bmu} )]\\
& \hspace*{-1.6in} \leq c_{\bmu}+c_{\bnu}\cdot \indic_{\{q_{\bnu}>0\}} + \E[V_{k}( (\bq-\bfe_{\bnu})^++\bA )]
+ c_{\bmu}+c_{\bnu}\cdot \indic_{\{q_{\bnu}>0\}} + \E[V_{k}( (\bq-\bfe_{\bnu})^++\bA )]
\leq
2\,V_{k+1}(\bq+\bfe_{\bmu}),
\end{align*}
where the first inequality follows from Proposition~\ref{prop:trivial_boundary}.

Second, suppose that the optimal actions are serving queues $\brho$ and $\bomega$.
Then, we have
\begin{align*}
V_{k+1}(\bq)+V_{k+1}(\bq+2\bfe_{\bmu}) \quad &\\
& \hspace*{-1.6in} =2(c_{\brho}\cdot \indic_{\{q_{\brho}>0\}}+c_{\bomega}\cdot \indic_{\{q_{\bomega}>0\}}) + \E[V_{k}( (\bq-\bfe_{\brho}-\bfe_{\bomega})^++\bA )]
+ \E[V_{k}( (\bq-\bfe_{\brho}-\bfe_{\bomega})^++\bA+2\bfe_{\bmu} )]\\
& \hspace*{-1.6in} \leq2(c_{\brho}\cdot \indic_{\{q_{\brho}>0\}}+c_{\bomega}\cdot \indic_{\{q_{\bomega}>0\}}) + 2\E[V_{k}( (\bq-\bfe_{\brho}-\bfe_{\bomega})^++\bA+\bfe_{\bmu} )]
\leq 2\,V_{k+1}(\bq+\bfe_{\bmu});
\end{align*}
here the first inequality follows from the induction hypothesis (substituting $(\bq-\bfe_{\brho}-\bfe_{\bomega})^++\bA$ for $\bq$ in \eqref{eq:for_two} for $V_k$) and the second inequality follows from the definition of the value iteration. 

Finally, assume that the optimal action on $\bq$ is to serve queues $\brho$ and $\bomega$, and the optimal action on $\bq+2\bfe_{\bmu}$ is to serve queues $\bmu$ and $\bnu$.
We then obtain
\begin{align*}
V_{k+1}(\bq+2\bfe_{\bmu})+V_{k+1}(\bq) &\\
& \hspace*{-1.4in} =c_{\bmu}+c_{\bnu}\cdot \indic_{\{q_{\bnu}>0\}} + \beta\E[V_{k}( (\bq-\bfe_{\bnu})^++\bA+\bfe_{\bmu} )]
+ c_{\brho}\cdot \indic_{\{q_{\brho}>0\}}+c_{\bomega}\cdot \indic_{\{q_{\bomega}>0\}}
+ \beta\E[V_{k}( (\bq-\bfe_{\brho}-\bfe_{\bomega})^++\bA )]\\
& \hspace*{-1.4in} =(c_{\bmu}+c_{\bnu}\cdot \indic_{\{q_{\bnu}>0\}}+c_{\brho}\cdot \indic_{\{q_{\brho}>0\}}+c_{\bomega}\cdot \indic_{\{q_{\bomega}>0\}})
+ \E[V_{k}(\bQ+\bz)+V_{k}(\bQ+\bw)],\\
2 V_{k+1}(\bq+\bfe_{\bmu}) \qquad\qquad\quad &\\
& \hspace*{-1.4in} \geq c_{\bmu}+c_{\bnu}\cdot \indic_{\{q_{\bnu}>0\}} + \beta\E[V_{k}( (\bq-\bfe_{\bnu})^++\bA )]
+ c_{\brho}\cdot \indic_{\{q_{\brho}>0\}}+c_{\bomega}\cdot \indic_{\{q_{\bomega}>0\}}
+ \beta\E[V_{k}( (\bq-\bfe_{\brho}-\bfe_{\bomega})^++\bA+\bfe_{\bmu} )]\\
& \hspace*{-1.4in} =(c_{\bmu}+c_{\bnu}\cdot \indic_{\{q_{\bnu}>0\}}+c_{\brho}\cdot \indic_{\{q_{\brho}>0\}}+c_{\bomega}\cdot \indic_{\{q_{\bomega}>0\}})
+ \E[V_{k}(\bQ+\bx)+V_{k}(\bQ+\by)],
\end{align*}
where
$\bQ\eqdef (\bq-\bfe_{\bnu}-\bfe_{\brho}-\bfe_{\bomega})^++\bA$,
$\bx\eqdef \bfe_{\bmu}+\bq-(\bq-\bfe_{\bnu})^+$,
$\by\eqdef \bq-(\bq-\bfe_{\brho}-\bfe_{\bomega})^+$,
$\bz\eqdef \bfe_{\bmu}+\bq-(\bq-\bfe_{\brho}-\bfe_{\bomega})^+$,
$\bw\eqdef \bq-(\bq-\bfe_{\bnu})^+$.
We also have $\bx,\by,\bz,\bw\in\{0,1\}^{4}$, $\bx\leq \bfe_{\bmu}+\bfe_{\bnu}$, $\by\leq\bfe_{\brho}+\bfe_{\bomega}$, $\bx+\by=\bz+\bw$,
and thus we obtain
\begin{align*}
	2 V_{k+1}(\bq+\bfe_{\bmu}) \geq  V_{k+1}(\bq+2\bfe_{\bmu})+V_{k+1}(\bq) ,
\end{align*}
which follows from Lemma~\ref{LEMMA:AUX_INEQUALITY_ALT}.

Hence, \eqref{eq:for_two} holds for the $(k+1)$-th value function $V_{k+1}$.

\mss{
\section{Proof of {\normalsize \eqref{eq:for_three_different}} for \boldmath{\normalsize $V_{k+1}$}}
\label{app:for_three_different}
}
We prove that equation \eqref{eq:for_three_different} of Proposition~\ref{Prop:7} for $n=k+1$, where $\bmu$ and $\bnu$ can be served simultaneously,
holds in each case depending on the three optimal actions of the $(k+1)$-th value iteration on $\bq$ and $\bq+2\bfe_{\bmu}+\bfe_{\bnu}$.
The fourth case can again be ignored because, from the induction hypothesis and Lemma \ref{lem:IH_switching_curve}, we know that if serving queues $\bmu$ and $\bnu$ is an optimal action on $\bq$ in the $(k+1)$-th value iteration, then it will continue to be an optimal action on $\bq+2\bfe_{\bmu}+\bfe_{\bnu}$.

First, assume that both optimal actions are serving queues $\bmu$ and $\bnu$.
If $q_{\bmu}>0$ and $q_{\bnu}>0$, we obtain
\begin{align*}
V_{k+1}(\bq)+V_{k+1}(\bq+2\bfe_{\bmu}+\bfe_{\bnu}) =& c_{\bmu}+c_{\bnu} + \beta\E[V_{k}( \bq+\bA-\bfe_{\bmu}-\bfe_{\bnu} )]
+ c_{\bmu}+c_{\bnu} + \beta\E[V_{k}( \bq+\bA+\bfe_{\bmu} )]\\
\leq& c_{\bmu}+c_{\bnu} + \beta\E[V_{k}(\bq+\bA) ]
+ c_{\bmu}+c_{\bnu} + \beta\E[V_{k}(\bq+\bA-\bfe_{\bnu})]\\
\leq& V_{k+1}(\bq+\bfe_{\bmu}+\bfe_{\bnu})+V_{k+1}(\bq+\bfe_{\bmu});
\end{align*}
here the first inequality follows from the induction hypothesis (substituting $\bq+\bA-\bfe_{\bmu}-\bfe_{\bnu}$ for $\bq$ in \eqref{eq:for_three_different} for $V_k$) and the second inequality follows from the definition of the value iteration. 
On the other hand, if $q_{\bmu}>0$ and $q_{\bnu}=0$, we have
\begin{align*}
V_{k+1}(\bq)+V_{k+1}(\bq+2\bfe_{\bmu}+\bfe_{\bnu}) =& c_{\bmu} + \beta\E[V_k(\bq+\bA-\bfe_{\bmu})]
+ c_{\bmu}+c_{\bnu} + \beta\E[V_k(\bq+\bA+\bfe_{\bmu})]\\
\leq& c_{\bmu}+\beta\E[ V_k(\bq+\bA) ]
+ c_{\bmu}+c_{\bnu} + \beta\E[V_{k}(\bq+\bA )] \\
\leq& V_{k+1}(\bq+\bfe_{\bmu})+V_{k+1}(\bq+\bfe_{\bmu}+\bfe_{\bnu}),
\end{align*}
where the first inequality follows from the induction hypothesis (substituting $\bq+\bA-\bfe_{\bmu}$ for $\bq$ in \eqref{eq:for_two} for $V_k$) and the second inequality follows from the definition of the value iteration. 
Lastly, if $q_{\bmu}=0$, we obtain
\begin{align*}
V_{k+1}(\bq)+V_{k+1}(\bq+2\bfe_{\bmu}+\bfe_{\bnu}) =& c_{\bnu}\cdot \indic_{\{q_{\bnu}>0\}} + \E[V_{k}( (\bq-\bfe_{\bnu})^++\bA )]
+ c_{\bmu}+c_{\bnu} + \E[V_{k}( \bq+\bA+\bfe_{\bmu} )]\\
\leq& c_{\bmu}+c_{\bnu}\cdot \indic_{\{q_{\bnu}>0\}} + \E[V_{k}( (\bq-\bfe_{\bnu})^++\bA )]
+ c_{\bmu}+c_{\bnu} + \E[V_{k}( \bq+\bA )]\\
\leq& V_{k+1}(\bq+\bfe_{\bmu})+V_{k+1}(\bq+\bfe_{\bmu}+\bfe_{\bnu}),
\end{align*}
where the first inequality follows from Proposition~\ref{prop:trivial_boundary} and the second inequality follows from the definition of the value iteration.

Second, suppose that both optimal actions are serving queues $\brho$ and $\bomega$.
Then, we have
\begin{align*}
V_{k+1}(\bq)+V_{k+1}(\bq+2\bfe_{\bmu}+\bfe_{\bnu}) &\\
& \hspace*{-1.8in} =2(c_{\brho}\cdot \indic_{\{q_{\brho}>0\}}+c_{\bomega}\cdot \indic_{\{q_{\bomega}>0\}}) + \beta\E[ V_k( (\bq-\bfe_{\brho}-\bfe_{\bomega})^++\bA )]
+ \beta\E[ V_k( (\bq-\bfe_{\brho}-\bfe_{\bomega})^++\bA+2\bfe_{\bmu}+\bfe_{\bnu} )]\\
& \hspace*{-1.8in} \leq 2(c_{\brho}\cdot \indic_{\{q_{\brho}>0\}}+c_{\bomega}\cdot \indic_{\{q_{\bomega}>0\}})
+ \beta\E[ V_k( (\bq-\bfe_{\brho}-\bfe_{\bomega})^++\bA+\bfe_{\bmu}+\bfe_{\bnu} )]
+ \beta\E[ V_k( (\bq-\bfe_{\brho}-\bfe_{\bomega})^++\bA+\bfe_{\bmu} )]\\
& \hspace*{-1.8in} \leq 
V_{k+1}(\bq+\bfe_{\bmu}+\bfe_{\bnu})+V_{k+1}(\bq+\bfe_{\bmu});
\end{align*}
here the first inequality follows from the induction hypothesis (substituting $(\bq-\bfe_{\brho}-\bfe_{\bomega})^++\bA$ for $\bq$ in \eqref{eq:for_three_different} for $V_k$) and the second inequality follows from the definition of the value iteration.

Finally, assume that the optimal action on $\bq$ is to serve queues $\brho$ and $\bomega$,
and the optimal action on $\bq+2\bfe_{\bmu}+\bfe_{\bnu}$ is to serve queues $\bmu$ and $\bnu$.
Then, we obtain
\begin{align*}
V_{k+1}(\bq+2\bfe_{\bmu}+\bfe_{\bnu})+V_{k+1}(\bq) =& c_{\bmu}+c_{\bnu} + \beta\E[ V_k(\bq+\bA+\bfe_{\bmu})]
+ c_{\brho}\cdot \indic_{\{q_{\brho}>0\}}+c_{\bomega}\cdot \indic_{\{q_{\bomega}>0\}}\\
&\quad +\beta\E[ V_k((\bq-\bfe_{\brho}-\bfe_{\bomega})^++\bA) ]\\
=& ( c_{\bmu}+c_{\bnu}+ c_{\brho}\cdot \indic_{\{q_{\brho}>0\}}+c_{\bomega}\cdot \indic_{\{q_{\bomega}>0\}})
+ \E[V_{k}(\bQ+\bz)+V_{k}(\bQ+\bw)], \\
V_{k+1}(\bq+\bfe_{\bmu}+\bfe_{\bnu})+V_{k+1}(\bq+\bfe_{\bmu}) \geq& c_{\bmu}+c_{\bnu} + \beta\E[ V_k(\bq+\bA)]
+ c_{\brho}\cdot \indic_{\{q_{\brho}>0\}}+c_{\bomega}\cdot \indic_{\{q_{\bomega}>0\}}\\
&\quad +\beta\E[ V_k((\bq-\bfe_{\brho}-\bfe_{\bomega})^++\bA+\bfe_{\bmu}) ]\\
=& ( c_{\bmu}+c_{\bnu}+ c_{\brho}\cdot \indic_{\{q_{\brho}>0\}}+c_{\bomega}\cdot \indic_{\{q_{\bomega}>0\}})
+ \E[V_{k}(\bQ+\bx)+V_{k}(\bQ+\by)], 
\end{align*}
where 
$\bQ\eqdef (\bq-\bfe_{\brho}-\bfe_{\bomega})^++\bA$,
$\bx\eqdef \bfe_{\bmu}$,
$\by\eqdef \bq-(\bq-\bfe_{\brho}-\bfe_{\bomega})^+$,
$\bz\eqdef \bfe_{\bmu}+\bq-(\bq-\bfe_{\brho}-\bfe_{\bomega})^+$,
$\bw\eqdef \bzero$.
We also have $\bx,\by,\bz,\bw\in\{0,1\}^{4}$, $\bx\leq \bfe_{\bmu}+\bfe_{\bnu}$, $\by\leq\bfe_{\brho}+\bfe_{\bomega}$, $\bx+\by=\bz+\bw$,
and thus we obtain
\begin{align*}
V_{k+1}(\bq+\bfe_{\bmu}+\bfe_{\bnu})+V_{k+1}(\bq+\bfe_{\bmu}) &\geq  V_{k+1}(\bq+2\bfe_{\bmu}+\bfe_{\bnu})+V_{k+1}(\bq) ,
\end{align*}
which follows from Lemma~\ref{LEMMA:AUX_INEQUALITY_ALT}.

Hence, \eqref{eq:for_three_different} holds for all value functions.

\mss{\section{Proof of Lemma~\ref{LEMMA:AUX_INEQUALITY_ALT}}
\label{sec:pf:aux_ineqaulity_alt}}
We prove Lemma~\ref{LEMMA:AUX_INEQUALITY_ALT} by induction.
The proposition is true for $V_{0}$ because $V_{0}(\bq)=0$ for all $\bq\in\Int_+^{4}$.
Now, suppose that the proposition holds for the $k$-th value function $V_k$ and that
$\bx,\by\in\{0,1\}^4$ and $\bz,\bw\in \Ints^{4}$ 
satisfy the assumptions in Lemma~\ref{LEMMA:AUX_INEQUALITY_ALT}.
We then show that \eqref{eq:aux_inequality} holds for the $(k+1)$-th value function $V_{k+1}$ in each of the following cases which depend on the optimal actions of $k$-th value iteration on $\bq+\bz$ and $\bq+\bw$.

\begin{enumerate}
\item[(I):] \textit{Both optimal actions are the same}.
Without loss of generality, assume that both optimal actions are serving queues $\bmu$ and $\bnu$.
Then, the right-hand side of \eqref{eq:aux_inequality} becomes
\begin{align}
V_{k+1}(\bq+\bz)+V_{k+1}(\bq+\bw) =& c_{\bmu}\cdot \indic_{\{q_{\bmu}+z_{\bmu}>0\}}+ c_{\bnu}\cdot \indic_{\{q_{\bnu}+z_{\bnu}>0\}}
+ \beta\E[ V_k( (\bq+\bz-\bfe_{\bmu}-\bfe_{\bnu})^++\bA  ) ] \nonumber \\
&\quad +\ c_{\bmu}\cdot \indic_{\{q_{\bmu}+w_{\bmu}>0\}}+ c_{\bnu}\cdot \indic_{\{q_{\bnu}+w_{\bnu}>0\}}
+ \beta\E[ V_k( (\bq+\bw-\bfe_{\bmu}-\bfe_{\bnu})^++\bA  ) ] \nonumber \\
=& c_{\bmu}(\indic_{\{q_{\bmu}+z_{\bmu}>0\}}+\indic_{\{q_{\bmu}+w_{\bmu}>0\}})
+ c_{\bnu}(\indic_{\{q_{\bnu}+z_{\bnu}>0\}}+\indic_{\{q_{\bnu}+w_{\bnu}>0\}}) \nonumber \\
&\quad + \beta\E[V_k(\bQ+\bz^\prime)+V_k(\bQ+\bw^\prime)], \label{eq:RHS_V_n_same_action}
\end{align}
where 
$\bQ\eqdef(\bq-\bfe_{\bmu}-\bfe_{\bnu})^++\bA$,
$\bz^\prime\eqdef(\bq+\bz-\bfe_{\bmu}-\bfe_{\bnu})^+-(\bq-\bfe_{\bmu}-\bfe_{\bnu})^+$,
$\bw^\prime\eqdef(\bq+\bw-\bfe_{\bmu}-\bfe_{\bnu})^+-(\bq-\bfe_{\bmu}-\bfe_{\bnu})^+$.
On the other hand, by the definition of the value iteration, we obtain
\begin{align}
V_{k+1}(\bq+\bx)+V_{k+1}(\bq+\by) \geq & c_{\bmu}\cdot \indic_{\{q_{\bmu}+x_{\bmu}>0\}}+ c_{\bnu}\cdot \indic_{\{q_{\bnu}+x_{\bnu}>0\}}
+ \beta\E[ V_k( (\bq+\bx-\bfe_{\bmu}-\bfe_{\bnu})^++\bA  ) ] \nonumber \\
&\quad +  c_{\bmu}\cdot \indic_{\{q_{\bmu}+y_{\bmu}>0\}}+ c_{\bnu}\cdot \indic_{\{q_{\bnu}+y_{\bnu}>0\}}
+ \beta\E[ V_k( (\bq+\by-\bfe_{\bmu}-\bfe_{\bnu})^++\bA ) ] \nonumber \\
=& c_{\bmu}(\indic_{\{q_{\bmu}+x_{\bmu}>0\}}+\indic_{\{q_{\bmu}+y_{\bmu}>0\}})
+ c_{\bnu}(\indic_{\{q_{\bnu}+x_{\bnu}>0\}}+\indic_{\{q_{\bnu}+y_{\bnu}>0\}}) \nonumber \\
&\quad + \beta\E[V_k(\bQ+\bx^\prime)+V_k(\bQ+\by^\prime)], \label{eq:LHS_V_n_same_action}
\end{align} 
where 
$\bx^\prime\eqdef(\bq+\bx-\bfe_{\bmu}-\bfe_{\bnu})^+-(\bq-\bfe_{\bmu}-\bfe_{\bnu})^+$,
$\by^\prime\eqdef(\bq+\by-\bfe_{\bmu}-\bfe_{\bnu})^+-(\bq-\bfe_{\bmu}-\bfe_{\bnu})^+$.
Next, comparing the right-hand side of the above equations, since $\bx+\by=\bz+\bw$ and $\bx,\by\in\{0,1\}^4$ and $\bz,\bw\in \Ints^{4}$, we have
\begin{align*}
c_{\bmu}(\indic_{\{q_{\bmu}+x_{\bmu}>0\}}+ \indic_{\{q_{\bmu}+y_{\bmu}>0\}})
	&=c_{\bmu}(\indic_{\{q_{\bmu}+z_{\bmu}>0\}}+ \indic_{\{q_{\bmu}+w_{\bmu}>0\}}),\\
c_{\bnu}(\indic_{\{q_{\bnu}+x_{\bnu}>0\}}+ \indic_{\{q_{\bnu}+y_{\bnu}>0\}})
	&=c_{\bnu}(\indic_{\{q_{\bnu}+z_{\bnu}>0\}}+ \indic_{\{q_{\bnu}+w_{\bnu}>0\}}),
\end{align*}
and thus the first two terms on the right-hand side of  \eqref{eq:RHS_V_n_same_action} and \eqref{eq:LHS_V_n_same_action} are the same. \\
For the last part, it is readily verified that $\bx,\by\in\{0,1\}^4$ and $\bz,\bw\in \Ints^{4}$, $\bx^\prime\leq\bfe_{\bmu}+\bfe_{\bnu}$, $\by^\prime\leq\bfe_{\brho}+\bfe_{\bomega}$, and  $\bx^\prime+\by^\prime=\bz^\prime+\bw^\prime$,
which implies that 
\begin{align*}
	V_k(\bQ+\bz^\prime)+V_k(\bQ+\bw^\prime)&\leq V_k(\bQ+\bx^\prime)+V_k(\bQ+\by^\prime)
\end{align*}
because $V_k$ satisfies Lemma~\ref{LEMMA:AUX_INEQUALITY_ALT} (induction hypothesis).
Hence, \eqref{eq:aux_inequality} holds for $V_{k+1}$ in this case.

\item[(II):] \textit{Two optimal actions are different from each other}.
Without loss of generality, assume that the optimal action on $(\bq+\bz)$ is serving queues $\bmu$ and $\bnu$, and the optimal action on $(\bq+\bw)$ is serving queues $\brho$ and $\bomega$.
Then, as before, the right-hand side and left-hand side of \eqref{eq:aux_inequality} become
\begin{align}
V_{k+1}(\bq+\bz)+V_{k+1}(\bq+\bw) =& c_{\bmu}\cdot \indic_{\{q_{\bmu}+z_{\bmu}>0\}}+ c_{\bnu}\cdot \indic_{\{q_{\bnu}+z_{\bnu}>0\}}
+ c_{\brho}\cdot \indic_{\{q_{\brho}+w_{\brho}>0\}}+ c_{\bomega}\cdot \indic_{\{q_{\bomega}+w_{\bomega}>0\}} \nonumber \\
&\quad  +\beta\E[ V_k( \bQ+\bz^\prime  ) + V_k( \bQ+\bw^\prime  ) ] , \label{eq:RHS_V_n_different_action} \\
V_{k+1}(\bq+\bx)+V_{k+1}(\bq+\by) \geq& c_{\bmu}\cdot \indic_{\{q_{\bmu}+x_{\bmu}>0\}}+ c_{\bnu}\cdot \indic_{\{q_{\bnu}+x_{\bnu}>0\}}
+ c_{\brho}\cdot \indic_{\{q_{\brho}+y_{\brho}>0\}}+ c_{\bomega}\cdot \indic_{\{q_{\bomega}+y_{\bomega}>0\}} \nonumber \\
&\quad +\beta\E[ V_k( \bQ+\bx^\prime  ) + V_k( \bQ+\by^\prime ) ], \label{eq:LHS_V_n_differnt_action}
\end{align}
where we can have two separated cases.
Namely, Case (IIa): $\bx+\by-\bfe_{\bmu}-\bfe_{\bnu}-\bfe_{\brho}-\bfe_{\bomega}\ge 0$, and then
\begin{align*}
\bQ &= \bq+\bA, \quad
\bx^\prime =\by^\prime  = 0,\\
\bz^\prime &= (\bq+\bz-\bfe_{\bmu}-\bfe_{\bnu})^+-\bq,
 \quad \bw^\prime = (\bq+\bw-\bfe_{\brho}-\bfe_{\bomega})^+-\bq;
\end{align*}
and Case (IIb): at least one of the components of  $\bx+\by-\bfe_{\bmu}-\bfe_{\bnu}-\bfe_{\brho}-\bfe_{\bomega}$ is negative, and then
\begin{align*}
\bQ &= (\bq+\bx+\by-\bfe_{\bmu}-\bfe_{\bnu}-\bfe_{\brho}-\bfe_{\bomega})^++\bA,\\
\by^\prime &=(\bq+\bx-\bfe_{\bmu}-\bfe_{\bnu})^+-(\bq+\bx+\by-\bfe_{\bmu}-\bfe_{\bnu}-\bfe_{\brho}-\bfe_{\bomega})^+, \\
\bx^\prime & = (\bq+\by-\bfe_{\brho}-\bfe_{\bomega})^+-(\bq+\bx+\by-\bfe_{\bmu}-\bfe_{\bnu}-\bfe_{\brho}-\bfe_{\bomega})^+,\\
\bz^\prime &= (\bq+\bz-\bfe_{\bmu}-\bfe_{\bnu})^+-(\bq+\bx+\by-\bfe_{\bmu}-\bfe_{\bnu}-\bfe_{\brho}-\bfe_{\bomega})^+,
\\ \bw^\prime &= (\bq+\bw-\bfe_{\brho}-\bfe_{\bomega})^+-(\bq+\bx+\by-\bfe_{\bmu}-\bfe_{\bnu}-\bfe_{\brho}-\bfe_{\bomega})^+.
\end{align*}


For Case (IIa): the conditions for $\bx'$ and $\by'$ are trivially satisfied; if $\bq+\bz-\bfe_{\bmu}-\bfe_{\bnu}\ge 0$ and $\bq+\bw-\bfe_{\brho}-\bfe_{\bomega}\ge 0$, then we have
$\bx'+\by'=\bz'+\bw'$, and the induction assumption implies
\begin{align*}
	V_k(\bQ+\bz^\prime)+V_k(\bQ+\bw^\prime)&\leq V_k(\bQ+\bx^\prime)+V_k(\bQ+\by^\prime).
\end{align*}
These conditions also imply that $\indic_{\{q_{\bmu}+z_{\bmu}>0\}}=\indic_{\{q_{\bnu}+z_{\bnu}>0\}}=\indic_{\{q_{\brho}+w_{\brho}>0\}}= \indic_{\{q_{\bomega}+w_{\bomega}>0\}}=1$. In summary, 
\begin{align*}
V_{k+1}(\bq+\bx)+V_{k+1}(\bq+\by) \geq V_{k+1}(\bq+\bz)+V_{k+1}(\bq+\bw).
\end{align*}

For Case (IIb), where at least one of the components of $\bq+\bz-\bfe_{\bmu}-\bfe_{\bnu}$ and $\bq+\bw-\bfe_{\brho}-\bfe_{\bomega}$ is negative, we need to treat it differently. Without loss of generality, let us assume that it is the $\bmu$ component that is negative, i.e. $q_{\bmu}+z_{\bmu}=0$. Thus,  $\bx'+\by'=\bz'+\bw'-\bfe_{\bmu}$, and $\bw'$ will have positive component in $\bmu$. Define, 
$\bz'' = \bz'$ and $\bw''=\bw'-\bfe_{\bmu}$. Obviously, $\bx'+\by'=\bz''+\bw''$, and the induction assumption implies
\begin{align*}
	V_k(\bQ+\bz'')+V_k(\bQ+\bw'')&\leq V_k(\bQ+\bx^\prime)+V_k(\bQ+\by^\prime).
\end{align*}
Meanwhile, it is easy to see that
\begin{align*}
	V_k(\bQ+\bz^\prime)+V_k(\bQ+\bw^\prime)&\leq c_{\bmu}+ V_k(\bQ+\bz'')+V_k(\bQ+\bw'').
\end{align*}
Combine the above two together, we have
\begin{align*}
	V_k(\bQ+\bz^\prime)+V_k(\bQ+\bw^\prime)&\leq c_{\bmu}+ V_k(\bQ+\bx^\prime)+V_k(\bQ+\by^\prime).
\end{align*}
This implies that, since $\indic_{\{q_{\bmu}+z_{\bmu}>0\}}=0$,
\begin{align}
\label{eqn:one_extra_in_z}
	c_{\bmu}\cdot \indic_{\{q_{\bmu}+z_{\bmu}>0\}}+\beta\ex[V_k(\bQ+\bz^\prime)+V_k(\bQ+\bw^\prime)]&\le c_{\bmu}\cdot \indic_{\{q_{\bmu}+x_{\bmu}>0\}}+ \beta\ex[V_k(\bQ+\bx^\prime)+V_k(\bQ+\by^\prime)].
\end{align}
Similar steps can be applied to the other components, yielding
\begin{align*}
V_{k+1}(\bq+\bx)+V_{k+1}(\bq+\by) \geq V_{k+1}(\bq+\bz)+V_{k+1}(\bq+\bw).
\end{align*}

For Case (IIb): it is easy to check that the conditions for $\bx'$ and $\by'$ are satisfied; similar to the case that $\bq+\bz-\bfe_{\bmu}-\bfe_{\bnu}\ge 0$ and $\bq+\bw-\bfe_{\brho}-\bfe_{\bomega}\ge 0$, we then have
$\bx'+\by'=\bz'+\bw'$, and thus the induction assumption leads us to the desired inequality; in the case that at least one of the components of the two vectors is negative, we apply a similar treatment as above where \eqref{eqn:one_extra_in_z} is obtained. 


\end{enumerate}

Hence, the $(k+1)$-th value function $V_{k+1}$ satisfies \eqref{eq:aux_inequality} and Lemma~\ref{LEMMA:AUX_INEQUALITY_ALT} holds.
\end{APPENDICES}



\bibliographystyle{informs2014} 
\bibliography{references,temp} 

\begin{thebibliography}{22}
\providecommand{\natexlab}[1]{#1}
\providecommand{\url}[1]{\texttt{#1}}
\providecommand{\urlprefix}{URL }

\bibitem[{Andrews et~al.(2007)Andrews, Jung, \protect\BIBand{}
  Stolyar}]{Stolyar_cone_SSC}
Andrews M, Jung K, Stolyar A (2007) Stability of the max-weight routing and
  scheduling protocol in dynamic networks and at critical loads.
  \emph{Proceedings of the Thirty-ninth Annual ACM Symposium on Theory of
  Computing}, 145--154, STOC '07.

\bibitem[{Baras et~al.(1985)Baras, Ma, \protect\BIBand{}
  Makowski}]{baras_general}
Baras J, Ma DJ, Makowski A (1985) K competing queues with geometric service
  requirements and linear costs: The $\mu c$-rule is always optimal.
  \emph{Systems \& Control Letters} 6:173--180.

\bibitem[{Baras et~al.(1983)Baras, Dorsey, \protect\BIBand{}
  Makowski}]{baras_two}
Baras JS, Dorsey AJ, Makowski AM (1983) Two competing queues with linear costs:
  The $\mu c$-rule is often optimal. \emph{The 22nd IEEE Conference on Decision
  and Control}, 1173--1178.

\bibitem[{Bertsekas(2012)}]{Bert12}
Bertsekas DP (2012) \emph{Dynamic Programming and Optimal Control, Volume II}
  (Athena Scientific), {Fourth} edition.

\bibitem[{Hern\'andez-Lerma \protect\BIBand{} Lasserre(1996)}]{O-L-book}
Hern\'andez-Lerma O, Lasserre J (1996) \emph{Discrete-Time {M}arkov Control
  Processes: Basic Optimality Criteria} (Springer-Verlag, New York, NY).

\bibitem[{Kang \protect\BIBand{} Williams(2012)}]{kang2009diffusion}
Kang WN, Williams RJ (2012) Diffusion approximation for an input-queued packet
  switch operating under a maximum weight algorithm. \emph{Stochastic Systems}
  2:277--321.

\bibitem[{Keslassy et~al.(2003)Keslassy, Zhang-Shen, \protect\BIBand{}
  McKeown}]{mckeown_maxsizeunstable}
Keslassy I, Zhang-Shen R, McKeown N (2003) Maximum size matching is unstable
  for any packet switch. \emph{IEEE Communications Letters} 7(10):496--498.

\bibitem[{Lu et~al.(2018)Lu, Maguluri, Squillante, \protect\BIBand{}
  Suk}]{LuMaSq+18}
Lu Y, Maguluri S, Squillante M, Suk T (2018) Optimal dynamic control for
  input-queued switches in heavy traffic. \emph{Proceedings of American Control
  Conference}.

\bibitem[{Lu et~al.(2017)Lu, Maguluri, Squillante, Suk, \protect\BIBand{}
  Wu}]{LuMaSq+17}
Lu Y, Maguluri S, Squillante M, Suk T, Wu X (2017) An optimal scheduling policy
  for the $2 \times 2$ input-queued switch with symmetric arrival rates.
  \emph{Proceedings of IFIP WG 7.3 Performance Conference}.

\bibitem[{Lu et~al.(2021)Lu, Maguluri, Squillante, \protect\BIBand{}
  Suk}]{LuMaSq+19}
Lu Y, Maguluri ST, Squillante MS, Suk T (2021) On heavy-traffic optimal scaling
  of c-weighted maxweight scheduling in input-queued switches. \emph{IEEE
  Transactions on Automatic Control} 1--1,
  \urlprefix\url{http://dx.doi.org/10.1109/TAC.2021.3121367}.

\bibitem[{Maguluri et~al.(2018)Maguluri, Burle, \protect\BIBand{}
  Srikant}]{maguluri2016b}
Maguluri ST, Burle SK, Srikant R (2018) Optimal heavy-traffic queue length
  scaling in an incompletely saturated switch. \emph{Queueing Systems: Theory
  and Applications} 88(3):279--309.

\bibitem[{Maguluri \protect\BIBand{} Srikant(2016)}]{MagSri_SSY16_Switch}
Maguluri ST, Srikant R (2016) Heavy traffic queue length behavior in a switch
  under the maxweight algorithm. \emph{Stoch. Syst.} 6(1):211--250,
  \urlprefix\url{http://dx.doi.org/10.1214/15-SSY193}.

\bibitem[{McKeown et~al.(1996)McKeown, Anantharam, \protect\BIBand{}
  Walrand}]{MCKEOWN96}
McKeown N, Anantharam V, Walrand J (1996) Achieving 100\% throughput in an
  input-queued switch. \emph{INFOCOM '96. Fifteenth Annual Joint Conference of
  the IEEE Computer Societies. Networking the Next Generation. Proceedings
  IEEE}, volume~1, 296--302 vol.1, ISSN 0743-166X,
  \urlprefix\url{http://dx.doi.org/10.1109/INFCOM.1996.497906}.

\bibitem[{Puterman(2005)}]{Pute05}
Puterman ML (2005) \emph{Markov Decision Processes: Discrete Stochastic Dynamic
  Programming} (John Wiley and Sons).

\bibitem[{Sarkar(2003)}]{Saswati_Switch_buffer}
Sarkar S (2003) Optimum scheduling and memory management in input queued
  switches with finite buffer space. \emph{Proceedings of INFOCOMM},
  1373--1383.

\bibitem[{Sennott(1999)}]{sennott-book}
Sennott L (1999) \emph{Stochastic Dynamic Programming and the Control of
  Queueing Systems} (John Wiley and Sons, New York, NY).

\bibitem[{Shah et~al.(2016)Shah, Tsitsiklis, \protect\BIBand{}
  Zhong}]{zhong2014scaling}
Shah D, Tsitsiklis JN, Zhong Y (2016) On queue-size scaling for input-queued
  switches. \emph{Stochastic Systems} 6(1):1--25.

\bibitem[{Shah et~al.(2014)Shah, Walton, \protect\BIBand{}
  Zhong}]{shah2012optimal}
Shah D, Walton NS, Zhong Y (2014) Optimal queue-size scaling in switched
  networks. \emph{Ann. Appl. Probab.} 24(6):2207--2245.

\bibitem[{Shah \protect\BIBand{} Wischik(2012)}]{ShaWis_11}
Shah D, Wischik D (2012) Switched networks with maximum weight policies: Fluid
  approximation and multiplicative state space collapse. \emph{The Annals of
  Applied Probability} 22(1):70--127.

\bibitem[{Stolyar(2004{\natexlab{a}})}]{stolyar2004}
Stolyar AL (2004{\natexlab{a}}) Maxweight scheduling in a generalized switch:
  State space collapse and workload minimization in heavy traffic. \emph{Ann.
  Appl. Probab.} 14(1):1--53,
  \urlprefix\url{http://dx.doi.org/10.1214/aoap/1075828046}.

\bibitem[{Stolyar(2004{\natexlab{b}})}]{stolyar2004maxweight}
Stolyar AL (2004{\natexlab{b}}) Maxweight scheduling in a generalized switch:
  State space collapse and workload minimization in heavy traffic. \emph{Annals
  of Applied Probability} 1--53.

\bibitem[{Tassiulas \protect\BIBand{} Ephremides(1992)}]{TasEph92}
Tassiulas L, Ephremides A (1992) Stability properties of constrained queueing
  systems and scheduling policies for maximum throughput in multihop radio
  networks. \emph{IEEE Transactions on Automatic Control} 37:1936--1948.

\end{thebibliography}



\end{document}